\documentclass[11pt, reqno, letterpaper]{amsart}
\usepackage{preamble_article_ECR}

\pagestyle{plain}
\usepackage[margin=1in]{geometry}
\setlength{\emergencystretch}{3em}

\title{The endoscopic character identity for even special orthogonal groups}
\date{}
\author{Hao Peng}
\markright{Peng: Endoscopic identity for SO(2n)}

\begin{document}



\maketitle

\begin{abstract}
We establish the endoscopic character identity for certain bounded $A$-packets of non-quasisplit even special orthogonal groups, with respect to elliptic endoscopic triples. The proof reduces the non-quasisplit case to the quasisplit case and the real Adams--Johnson case by combining the local-global compatibility principle with Arthur's multiplicity formula for non-quasisplit global even special orthogonal groups established by Chen and Zou~\cite{C-Z24}. This result plays a key role in the author's work~\cite{Pen25a} on the compatibility between the Fargues--Scholze local Langlands correspondence and the classical local Langlands correspondence for even special orthogonal groups.
\end{abstract}

\tableofcontents

\section{Introduction}

In his monumental book~\cite{Art13}, Arthur established the local Langlands correspondence for symplectic and quasisplit special orthogonal groups over local fields $K$ of characteristic zero (the endoscopic classification), along with a description of the automorphic discrete spectrum for these groups over number fields (Arthur's multiplicity formula), via the stable trace formula and the theory of endoscopy. However, for quasisplit even special orthogonal groups, the classification is given only for irreducible representations of $\SO(V)$ up to conjugation by $\bx O(V)$, rather than by $\SO(V)$ itself. Similarly, over a number field $F$, Arthur's multiplicity formula does not distinguish between a square-integrable automorphic representation $\pi$ and its twist by the outer automorphism associated with an element of $\bx O(\mbf V)\setm \SO(\mbf V)$.

In~\cite{C-Z21} and~\cite{C-Z21b}, Chen--Zou extended Arthur's endoscopic classification to non-quasisplit even special orthogonal groups. In~\cite{C-Z24}, they further extended Arthur's multiplicity formula to non-quasisplit even special orthogonal groups. (Strictly speaking, their results are stated for non-quasisplit even orthogonal groups, but this implies the corresponding result for special orthogonal groups via the argument of Atobe--Gan~\cite{A-G17}). Their approach relies on local (respectively, global) theta correspondences between even orthogonal groups and symplectic groups. As in other cases where the local Langlands correspondence was established via theta correspondence (e.g.~\cite{G-T11}, \cite{G-S12}), they did not establish the conjectural endoscopic character identity for the $L$-packets (respectively, ``theta packets'') as formulated in~\cite{Kal16a}.

The main goal of this short paper is to verify the conjectural endoscopic character identity for certain local $A$-parameters that are bounded on the Weil group. As in the quasisplit case~\cite[Theorem 2.2.1]{Art13}, the character identity holds only up to outer automorphism. Let $V$ be a quadratic space of dimension $2n$ over a \nA local field $K$ of characteristic zero, and let $G=\SO(V)$ be the associated special orthogonal group. Suppose we are given a (bounded) $A$-parameter $\psi$ of $G$ and an elliptic endoscopic triple $\mfk e=(G^{\mfk e}, s^{\mfk e}, \xi^{\mfk e})$ for $G$ (see~\S\ref{endodslsimisemifs}), together with an $A$-parameter $\psi^{\mfk e}$ of $G^{\mfk e}$ such that  $\xi^{\mfk e}\circ\psi^{\mfk e}=\psi$. Then $G^{\mfk e}$ is a product of two (possibly trivial) quasisplit special orthogonal groups $G_1=\SO(V_1)$ and $G_2=\SO(V_2)$ over $K$. Arthur~\cite{Art13} assigns to $\psi^{\mfk e}$ an $A$-packet $\tilde\Pi_{\psi^{\mfk e}}(G^{\mfk e})$ consisting of $\bx O(V_1)\times \bx O(V_2)$-conjugacy classes of irreducible unitarizable representations of $G^{\mfk e}(K)$. Similarly, in~\cite{C-Z21b}, a ``theta packet'' $\tilde\Pi_\psi(G)$ is assigned to $\psi$ via theta correspondence, consisting of $\bx O(V)$-conjugacy classes of irreducible unitarizable representations of $G(K)$. For notational uniformity, we will also refer to them as $A$-packets throughout this paper.

Let $G^*$ be the unique quasisplit inner form of $G$. Then $G$ can be realized as a pure inner twist $(\varrho, z)$ of $G^*$. Fix an additive character $\uppsi_K$ of $K$ and a pinning $(B^*, T^*, \{X^*_\alpha\}_{\alpha\in\Delta})$ of $G^*$, which together determine a Whittaker datum $\mfk w$ for $G^*$, cf. ~\cite[\S 5.3]{K-S99}. The Whittaker datum $\mfk w$ and the pure inner twist $(\varrho, z)$ determine a canonical map
\begin{equation*}
\iota_{\mfk w, z}: \tilde\Pi_\psi(G)\to\Irr(\mfk S_\psi), \quad \text{where }\mfk S_\psi\defining\pi_0(Z_{\hat G}(\Im(\psi))),
\end{equation*}
as defined in~Theorem~\ref{psmisemifniefms}.

There exists an outer automorphism $\varsigma$ of $G^*$ that preserves $\mfk w$. In fact, $\varsigma$ can be realized as an element of the corresponding orthogonal group
with determinant $-1$, cf. ~\cite[p.~847]{Tai19}. Via the isomorphism $\varrho$, the element $\varsigma$ induces a rational outer automorphism of $G$, cf. ~\cite[Lemma 9.1.1]{Art13}. Let $\mcl H(G)$ denote the space of smooth compactly supported complex-valued functions on $G(K)$. Following Arthur, let $\tilde{\mcl H}(G)$ denote the subspace of $\mcl H(G)$ consisting of $\varsigma$-invariant functions on $G(K)$. Irreducible smooth representations of $\tilde{\mcl H}(G)$ then correspond to $\bx O(V)$-conjugacy classes of irreducible admissible representations of $G(K)$. Similarly, define $\tilde{\mcl H}(G^{\mfk e})\defining\tilde{\mcl H}(G_1)\times \tilde{\mcl H}(G_2)$ as a subspace of $\mcl H(G^{\mfk e})$.

Fix Haar measures on $G(K)$ and $G^{\mfk e}(K)$. For each $\bx O(V)$-conjugacy class $\tilde\pi$ of admissible irreducible representations of $G(K)$, define the character distribution $\tilde\Theta_{\tilde\pi}$ of $\tilde\pi$ as follows. Choose a representative $\pi$ of $\tilde\pi$, and define $\tilde\Theta_{\tilde\pi}$ as the restriction of the distribution $\Theta_\pi$ to the subspace $\tilde{\mcl H}(G)$. This is independent of the representative $\pi$ chosen. Arthur~\cite{Art13} defined a distribution
\begin{equation*}
\tilde\Theta_{\psi^{\mfk e}}=\sum_{\tilde\pi\in\tilde\Pi_{\psi^{\mfk e}}(G^{\mfk e})}\bra{\tilde\pi, s_{\psi^{\mfk e}}}\tilde\Theta_{\tilde\pi},
\end{equation*}
which is well-defined for $f^{G^{\mfk e}}\in\tilde{\mcl H}(G^{\mfk e})$. Here $\bra{\tilde\pi, -}$ denotes the character of $\mfk S_\psi$ defined by Arthur \cite{Art13}*{Theorem 2.2.1}, and $s_{\psi^{\mfk e}}$ is the image of $-1\in\SL(2, \bb C)$ under $\psi^{\mfk e}$. The distribution $\tilde\Theta_{\psi^{\mfk e}}$ is stable; that is, $\tilde\Theta_{\psi^{\mfk e}}(f^{G^{\mfk e}})=0$ whenever all stable orbital integrals of $f^{G^{\mfk e}}$ vanish.

We prove the endoscopic character identity for a certain class of $A$-parameters. Let $\psi$ be an $A$-parameter of $G$ such that  
\begin{equation*}
\psi=\sum_i\psi_i\boxtimes\sp_{b_i}: W_K\times \SU(2)\times \SL(2, \bb C)\to \LL G
\end{equation*}
as a representation of $\WD_K\times \SL(2, \bb C)$, where $b_i\ne b_j$ for $i\ne j$. We say that $\psi$ is \tbf{good} if each $b_i$ is odd and each $\dim(\psi_i)$ is even, see~\S\ref{insiemefies}. In particular, every tempered $A$-parameter (i.e. a tempered $L$-parameter) is good. The following is the desired endoscopic character identity for $G$.

\begin{mainthm}\label{thm main}
Suppose $\psi$ is a good $A$-parameter. Then
\begin{equation*}
\tilde\Theta_{\psi^{\mfk e}}(f^{G^{\mfk e}})=\sum_{\tilde\pi\in\tilde\Pi_\psi(G)}\iota_{\mfk w, z}(\tilde\pi)(s^{\mfk e}s_\psi)\cdot\tilde\Theta_{\tilde\pi}(f)
\end{equation*}
for any pair of $\Delta[\mfk w, \mfk e, z]$-matching functions $f\in \tilde{\mcl H}(G)$ and $f^{G^{\mfk e}}\in \tilde{\mcl H}(G^{\mfk e})$ (see~\textup{\S\ref{isiseieieniws}}). Here, $\Delta[\mfk w, \mfk e, z]$ denotes the transfer factor between $G$ and $G^{\mfk e}$ defined by Kaletha~\cite{Kal16a}.
\end{mainthm}
\begin{rem}
There is no appearance of the Kottwitz sign $e(G)$~\cite{Kot83}, since it is always equal to $1$ for even special orthogonal groups over $K$. 
\end{rem}

The main theorem was established by Arthur~\cite[Theorem 2.2.1]{Art13} in the quasisplit case. This paper deals with the non-quasisplit case. Our strategy is to reduce to the quasisplit case via the local–global compatibility principle. Suppose $\psi^{\mfk e}$ is discrete and good (see~\S\ref{insiemefies}). We globalize the pair $(G, G^{\mfk e})$ over a totally real number field $F$, with $\mbf G$ quasisplit at all finite places except one (where its localization is $G$); such that both $\mbf G(F\otimes\bb R)$ and $\mbf G^{\mfk e}(F\otimes\bb R)$ admit discrete series.

We globalize the local parameters $\psi^{\mfk e}$ and $\psi$ to suitable elliptic global $A$-parameters of Adams--Johnson type at Archimedean places. We then apply the multiplicity formula established by Chen--Zou~\cite{C-Z24}. Combined with Arthur's stable multiplicity formula~\cite{Art13}, this implies that the endoscopic character identity for $G$ follows if it holds for all other localizations of $\mbf G$.

In the general case where $\psi^{\mfk e}$ is not discrete, we apply parabolic descent to reduce to the discrete case. Similar ideas were employed in the study of endoscopic character identities for inner forms of $\GSp(4)$~\cite{C-G15} and metaplectic groups~\cite{Luo20} over \nA local fields.

\begin{rem}
The results of this paper are formulated for good $A$-parameters. It would be very desirable to remove this restriction and treat general $A$-parameters. The reason for imposing this hypothesis is that, for more general parameters, the globalized parameter may factor through the endoscopic group $\mbf G^{\mfk e}$ in more than one way, making it difficult to isolate the contribution of the prescribed local parameter $\psi^{\mfk e}$. Thus this restriction is not merely a matter of exposition; rather, removing it appears to require additional ideas beyond the present local-global argument. For this reason, we do not pursue this generalization here.
\end{rem}

\begin{rem}
It is natural to ask for an analogue of the main result formulated for $\bx O(2n)$, in view of the formulation of Arthur's classification and subsequent works from that perspective, cf.~\cite{A-G17, AGIKMS}. We have chosen to work with $\SO(2n)$, since this is the form needed for the applications considered in this paper. An extension to $\bx O(2n)$ does not seem to be a purely formal reformulation of our argument; rather, it amounts to working in a formulation closely related to the corresponding twisted endoscopic character relations for $\SO(2n)$, and would require additional input not developed here. We therefore do not pursue this question in the present paper.
\end{rem}

The main result of this paper has been applied in the author's earlier work~\cite{Pen25a}, which establishes the compatibility between the Fargues--Scholze and classical local Langlands correspondences for even special orthogonal groups. Furthermore, using that compatibility, the author upgraded the ambiguous local Langlands correspondence to an ``unambiguous'' version---that is, not merely defined up to outer automorphism, cf.~\cite[\S~7.1]{Pen25a}.

\subsection*{Acknowledgements}

The author is indebted to Rui Chen and Jialiang Zou for introducing him to the local Langlands correspondence for even orthogonal groups. He is deeply grateful to his advisor, Prof. Wei Zhang, for his detailed feedback and many valuable suggestions. He also thanks Prof. Hiraku Atobe for encouraging him to consider the main result for bounded $A$-parameters, rather than restricting to the tempered case. Finally, he thanks the anonymous referees for their careful reading and numerous helpful suggestions.

\subsection{Notation and conventions}

\begin{note}\enskip
\begin{itemize}
\item
Let $\bb N$ denote the set of non-negative integers and $\bb Z_+$ the set of positive integers. As usual, we write $\bb Z$, $\bb Q$, $\bb R$, and $\bb C$ for the integers, rational numbers, real numbers, and complex numbers, respectively.
\item
Suppose $X$ is a set.
\begin{itemize}
\item
Let $\uno\in X$ denote the distinguished trivial element (this notation is only used when the notion of triviality is clear from context).
\item
Let $\#X$ be the cardinality of $X$.
\item
For $a, b\in X$, we define the Kronecker symbol
\begin{equation*}
\delta_{a, b}\defining\begin{cases} 1 &\If a=b,\\ 0 &\If a\ne b.\end{cases}
\end{equation*}
\end{itemize}
\item
For a finite group $A$, write $\Irr(A)$ for the set of isomorphism classes of its irreducible complex representations.
\item
For a perfect field $K$, write $\Gal_K$ for its absolute Galois group $\Gal(\ovl K/K)$.
\item
If $K$ is a local field of characteristic zero, write $W_K$ for its Weil group and $\WD_K$ for its Weil--Deligne group:
\begin{equation*}
\WD_K =
\begin{cases}
W_K & \text{if } K = \bb R \text{ or } \bb C, \\
W_K \times \SU(2) & \text{otherwise}.
\end{cases}
\end{equation*}
\item
If $K$ is non-Archimedean, let $I_K$ denote its inertia group, and $\largel{-}_K$ the normalized absolute value on $K^\times$. Via the Artin map $\Art_K: K^\times\xr\sim W_K^\ab$, we extend $\largel{-}_K$ to a character $W_K \to \bb R_+$.
\item
Suppose $F$ is a number field.
\begin{itemize}
\item
Let $\fPla_F$ denote the set of finite places of $F$, and $\infPla_F\defining\Hom(F, \bb C)$ the set of infinite places; set $\Pla_F = \fPla_F \cup \infPla_F$.
\item
Write $\Ade_F$ for the ring of adeles of $F$, $\Ade_{F,f}$ for the finite adeles, and $\mbf C_F$ for the idele class group.
\end{itemize}
\item
Suppose $K$ is a local field and $\msf G$ is a connected reductive group over $K$. Let $\mcl H(\msf G)$ denote the test function space on $\msf G(K)$:
\begin{equation*}
\mcl H(\msf G(K))=\begin{cases}
C_c^\infty(\msf G(K)) & \text{if } K \text{ is non-Archimedean},\\
\{\,f\in C_c^\infty(\msf G(\bb R)):\ f \text{ is bi-\(\mdc K\)-finite}\,\} & \text{if } K=\bb R,
\end{cases}
\end{equation*}
where $\mdc K\subset \msf G(K)$ is a fixed maximal compact subgroup.
\end{itemize}
\end{note}

\section{Local \texorpdfstring{$A$}{A}-packets}\label{lcoaliianisneiifs}

In this section, we recall properties of (ambiguous) $A$-packets following \cites{A-G17, C-Z21}.

\subsection{The groups}\label{sieiiwiwiwms}

Suppose $K$ is a local or global field of characteristic zero. Let $V$ be a $2n$-dimensional $K$-vector space (with $n\in\bb Z_+$), equipped with a non-degenerate symmetric bilinear form $\bra{-, -}$, that is,
\begin{equation*}
\bra{au+bv, w}=a\bra{u, w}+b\bra{v, w}, \quad\text{and}\quad\bra{v, w}=\bra{w, v}
\end{equation*}
for all $a, b\in K$ and $u, v\in V$. 

Fix an orthogonal basis $\{v_1, \ldots, v_{2n}\}$ of $V$ over $K$ such that  $\bra{v_i, v_i}=a_i\in K^\times$. Define the discriminant of $V$ as 
\begin{equation*}
\disc(V)=(-1)^n\prod_{i=1}^{2n}a_i
\end{equation*}
whose class in $K^\times/(K^\times)^2$ is independent of the choice of orthogonal basis. 

If $K$ is local, we define the (normalized) Hasse--Witt invariant 
\begin{equation*}
\eps(V)=\paren{-1, (-1)^{\binom{n}{2}}\cdot\disc(V)^{n-1}}_K\cdot\prod_{1\le i<j\le 2n}(a_i, a_j)_K
\end{equation*}
where
\begin{equation*}
(-, -)_K:\paren{K^\times/(K^\times)^2}\times\paren{K^\times/(K^\times)^2}\to \Br(K)[2]\cong\{\pm1\}
\end{equation*}
denotes the Hilbert symbol. 

We set $G\defining \SO(V)$. If $K$ is a non-Archimedean local field, then
\begin{itemize}
\item
$G$ is split if $\disc(V)=1, \eps(V)=1$;
\item
$G$ is non-quasisplit if $\disc(V)=1$ and $\eps(V)=-1$;
\item
$G$ is quasisplit but non-split if $\disc(V)\ne 1$.
\end{itemize}

As in \cite[Remark 2.4.(2)]{Bor79}, we denote by $\LL G$ the $L$-group of $G$ with component group $\Gal(K(\sqrt{\disc(V)})/K)$, and let $\hat G$ denote its identity component. Then $\hat G=\SO(2n, \bb C)$, and there is a canonical identification
\begin{equation}
\LL G=\begin{cases}
\SO(2n, \bb C) & \If \alpha=1,\\
\bx O(2n, \bb C) &\If \alpha\ne 1.
\end{cases}
\end{equation}
Let $\hat\Std: \LL G\to \GL_{2n}(\bb C)$ be the standard representation. For later use, for each $\alpha\in K^\times/(K^\times)^2$, we denote by $\SO_{2n}^\alpha$ the quasisplit special orthogonal group associated with some quadratic space over $K$ with discriminant $\alpha$.

We fix a pinning $(B^*, T^*, \{X^*_\alpha\}_{\alpha\in\Delta})$ of $G^*$ by identifying it with $\SO(V^*)$ for a suitable quadratic space $V^*$ over $K$, and choosing a complete flag of totally isotropic subspaces in $V^*$. Recall that a Whittaker datum for $G^*$ is a $T^*(K)$-conjugacy class of generic characters of $N^*(K)$, where $N^*$ is the unipotent radical of $B^*$. The set of Whittaker data for $G^*$ forms a principal homogeneous space over the finite Abelian group
\begin{equation*}
E=\coker\paren{G^*(K)\to G^*_\ad(K)}=\ker(H^1(K, Z(G^*))\to H^1(K, G^*)),
\end{equation*}
cf. ~\cite[\S 9]{GGP12}. The fixed pinning $(B^*, T^*, \{X^*_\alpha\}_{\alpha\in\Delta})$ of $G^*$, together with the additive character $\psi_K$ of $K$, determines a Whittaker datum $\mfk w$ for $G^*$, cf. ~\cite[\S 5.3]{K-S99}. If $K$ is non-Archimedean and $G$ is unramified, there exists a unique $G(K)$-conjugacy class of hyperspecial maximal compact open subgroups compatible with $\mfk w$, in the sense of \cite{C-S80}. In this case, ``unramified representations of $G(K)$'' refers to those unramified with respect to such a hyperspecial subgroup.


We may realize $G$ as a pure inner twist of $G^*$; that is, there exists an isomorphism $\varrho: G^*_{\ovl K}\to G_{\ovl K}$ and a $1$-cocycle $z\in Z^1(\Gal_K, G^*)$ such that
\begin{equation*}
\varrho^{-1}\sigma(\varrho)=\Ad(z(\sigma))\quad\text{ for all }\sigma\in \Gal_K. 
\end{equation*}

For each parabolic pair $(M, P)$ of $G$, there exists a unique standard parabolic pair $(M^*, P^*)$ of $G^*$ corresponding to $(M, P)$ under $\varrho$, and this determines an equivalence class of pure inner twist of $M^*$, which we also denote by $(\varrho, z)$ by abuse of notation.

Following Arthur \cite{Art13}, we define $\OAut_N(G^*)\defining\bx O(2n, \bb C)/\SO(2n, \bb C)$, which is a subgroup of cardinality 2 of the outer automorphism group of $\hat G$.\footnote{Here the subscript $N$ refers to the standard representation of the $L$-group of $G^*$ into $\GL_N(\bb C)$, under which $G^*$ is considered in the endoscopic framework for $\GL_N$.} It can be identified with the group of automorphisms of $G^*$ preserving the fixed pinning. Let $\varsigma$ denote the nontrivial element in $\OAut_N(G^*)$. There is a rational action of $\varsigma$ on $G$ via the inner twist $\varrho$, cf. ~\cite[Lemma 9.1.1]{Art13}.

\subsection{Local $A$-parameters and $A$-packets}\label{insiemefies}

In this subsection, we assume that $K$ is a local field. For any connected reductive group $\msf G$ over $K$, we let $\Psi(\msf G)$ denote the set of $\hat{\msf G}$-conjugacy classes of $A$-parameters
\begin{equation*}
\psi: \WD_K\times\SL(2, \bb C)\to \LL\msf G
\end{equation*}
such that $\psi(W_K)$ is a precompact subset of the target. Here $\LL\msf G=\hat{\msf G}\rtimes W_K$ is the Langlands $L$-group in the reduced Weil form, as defined in \cite[Remark 2.4.(2)]{Bor79}.

Let $\msf G^*$ be the unique quasisplit inner form of $\msf G$. Then the $L$-groups of $\msf G$ and $\msf G^*$ are canonically identified, so we identify $\Psi(\msf G)$ with $\Psi(\msf G^*)$.

Let $m\in\bb Z_+$. An $A$-parameter for $\GL(m)_K$ can be regarded as an isomorphism class of $m$-dimensional representations 
\begin{equation*}
\psi: \WD_K\times \SL(2, \bb C)\to \GL(m, \bb C).
\end{equation*}
Every such representation is isomorphic to a finite direct sum of representations of the form $\rho\boxtimes S_a\boxtimes \sp_b$, where
\begin{itemize}
\item
$\rho$ is a smooth representation of $W_K$,
\item
$S_a$ is the unique irreducible representation of $\SU(2)$ of dimension $a$ (Here we assumed that $a$ must equal to 1 if $K$ is Archimedean); and
\item
$\sp_b$ is the unique irreducible algebraic representation of $\SL(2, \bb C)$ of dimension $b$.
\end{itemize}

An $A$-parameter $\psi$ for $\GL(m)$ over $K$ is said to be self-dual and irreducible if $\psi\cong \psi^\vee$ as irreducible representations of $\WD_K\times\SL(2, \bb C)$. Following~\cite[\S 3]{GGP12}, we define the sign of a self-dual irreducible $A$-parameter $\psi$: There exists an isomorphism $f: \psi\cong \psi^\vee$ such that $f^\vee=b(\psi)f$ for some $b(\psi)\in\{\pm1\}$. The value $b(\psi)$ is independent of the choice of $f$, and is called the sign of $\psi$. If $\psi=\rho\boxtimes S_a\boxtimes\sp_b$, where $\rho$ is an irreducible representation of $W_K$, then
\begin{equation*}
b(\psi)=b(\rho)(-1)^{a+b},
\end{equation*}
cf. ~\cite[Lemma 3.2]{GGP12}, \cite[\S 1.2.4]{KMSW}.

We now restrict to the case $G=\SO(V)$. By~\cite[Theorem 8.1]{GGP12} and~\cite[p. 365]{A-G17}, there is a natural identification
\begin{equation*}
\Psi(G^*) = \left\{
\begin{aligned}
&\text{admissible }\psi: \WD_K \times \SL(2, \bb C) \to \bx O(2n, \bb C) \\
&\text{such that } \det(\psi) = (\Art_K^{-1}(-), \disc(V))_K
\end{aligned}
\right\} / \SO(2n, \bb C).
\end{equation*}
Here $\psi$ is called admissible if 
\begin{enumerate}
\item
$\psi(\Frob_K)$ is semisimple, where $\Frob_K$ is a geometric Frobenius element in $W_K$;
\item
$\psi|_{I_K}$ is smooth;
\item
$\psi(W_K)$ is a precompact subset of the target;
\item
$\psi|_{\SU(2)\times \SL(2, \bb C)}$ is algebraic.
\end{enumerate}
The group $\bx O(2n, \bb C)$ acts on $\Psi(G^*)$ by conjugation. Let $\tilde\Psi(G^*)$ denote the set of orbits, and write $\tilde\psi\in\tld\Psi(G^*)$ for the image of $\psi\in \Psi(G^*)$ in $\tilde\Psi(G^*)$.

Any $A$-parameter $\psi\in\Psi(G^*)$ gives rise, via the standard representation $\hat\Std$, to a self-dual representation $\psi^\GL$ of $\WD_K\times\SL(2, \bb C)$ of dimension $2n$. The parameter $\psi$ is determined by $\psi^\GL$ up to $\bx O(2n, \bb C)$-conjugacy, cf. ~\cite[Theorem 8.1]{GGP12}. Writing
\begin{equation*}
\psi^\GL=\sum_im_i\rho_i\boxtimes S_{a_i}\boxtimes\sp_{b_i}
\end{equation*}
as representations of $\WD_K\times\SL(2, \bb C)$, we define the following conditions: 
\begin{itemize}
\item
$\psi$ is called \tbf{tempered} (or an $L$-parameter) if $\psi|_{\SL(2, \bb C)}=\uno$;
\item
$\psi$ is called \tbf{discrete} if each $\rho_i$ is self-dual and $m_i=1$;
\item
$\psi$ is called a \tbf{simple supercuspidal $L$-parameter} if it is tempered, discrete, trivial on the $\SU(2)$-component, and irreducible as a $W_K$-representation;
\item
$\psi$ is called \tbf{elementary} if $m_i=1$ and $1\in\{a_i, b_i\}$ for all $i$;
\item
$\psi$ is called \tbf{good} if all $b_i$ are odd and $\psi$ can be written as
\begin{equation*}
\psi^\GL=\sum_j\phi_j\boxtimes\sp_{b_j},
\end{equation*}
where each $\phi_j$ is a representation of $\WD_K$ of even dimension. In particular, any tempered $L$-parameter is good.
\end{itemize}

We write $\psi^\GL$ as
\begin{equation*}
\psi^\GL=\bplus_{i\in I_\psi^+}m_i\psi_i\oplus \bplus_{i\in I_\psi^-}2m_i\psi_i\oplus\bplus_{i\in J_\psi}m_i(\psi_i\oplus\psi_i^\vee),
\end{equation*}
where $m_i\in\bb Z_+$, and $I_\psi^+, I_\psi^-, J_\psi$ index mutually inequivalent irreducible representations of $\WD_K\times\SL(2, \bb C)$ such that 
\begin{enumerate}
\item
For $i\in I_\psi^+$, $\psi_i$ is self-dual with sign $+1$.
\item
For $i\in I_\psi^-$, $\psi_i$ is self-dual with sign $-1$.
\item
For $i\in J_\psi$, $\psi_i$ is not self-dual.
\end{enumerate}
Then $\psi$ is discrete if and only if $m_i=1$ for all $i$, and $I_\psi^-=J_\psi=\vn$.

For any $\psi\in\Psi(G^*)$, we define
\begin{equation*}
S_\psi^\sharp\defining\prod_{i\in I_\psi^+}\bx O(m_i, \bb C)\times\prod_{i\in I_\psi^-}\Sp(2m_i, \bb C)\times\prod_{i\in J_\psi}\GL(m_i, \bb C),
\end{equation*}
which formally represents the centralizer of $\psi$ in $\LL G$, and its formal component group
\begin{equation*}
\mfk S_\psi^\sharp\defining \pi_0(S_\psi^\sharp)\cong \bplus_{i\in I_\psi^+}(\bb Z/2)e_i,
\end{equation*}
where each $e_i$ is a formal place-holder. There is a canonical identification
\begin{equation*}
\Irr(\mfk S_\psi^\sharp)=\bplus_{i\in I_\psi^+}(\bb Z/2)e_i^\vee, \quad\text{where}\quad e_i^\vee(e_j)=\delta_{i, j}.
\end{equation*}

We also define
\begin{equation*}
S_\psi\defining Z_{\hat G}(\psi)
\end{equation*}
which is naturally a subgroup of $S_\psi^\sharp$ of index at most two. There is a homomorphism
\begin{equation*}
\det\nolimits_\psi: \mfk S_\psi^\sharp\to \bb Z/2, \quad \sum_{i\in I_\psi^+}x_ie_i\mapsto \sum_{i\in I_\psi^+}x_i\dim(\psi_i),
\end{equation*}
and we define the formal component group $\mfk S_\psi\defining\ker(\det_\psi)= \pi_0(S_\psi)$. We define the central element
\begin{equation*}
z_\psi \defining\sum_{i \in I_\psi^+} m_i e_i \in \mfk S_\psi,
\end{equation*}
and define the reduced component group
\begin{equation*}
\ovl{\mfk S}_\psi \defining \mfk S_\psi /\bra{z_\psi}.
\end{equation*}
We denote by $s_\psi$ the image of $-1\in\SL(2, \bb C)$ under $\psi$, which lies in $S_\psi$.

If $\psi_1$ and $\psi_2$ are $\bx O(2n, \bb C)$-conjugate, then $\psi_1^\GL=\psi_2^\GL$. In particular, the definitions of $\psi^\GL, I_\psi^+, I_\psi^-, J_\psi, S_\psi^\sharp, S_\psi, \mfk S_\psi, \ovl{\mfk S}_\psi, z_\psi$, and $s_\psi$ are well-defined for $\tilde\psi\in \tilde\Psi(G^*)$.

There exists an outer automorphism $\varsigma$ of $G^*$ preserving the Whittaker datum $\mfk w$. It can be realized as an element of the corresponding orthogonal group with determinant $-1$, cf. ~\cite[p.~847]{Tai19}. Via the inner twisting $\varrho$, the element $\varsigma$ induces a rational outer automorphism of $G$, cf. \cite[Lemma 9.1.1]{Art13}. Following Arthur, we define $\tilde{\mcl H}(G)$ as the subspace of $\mcl H(G)$ consisting of $\varsigma$-invariant distributions on $G(K)$. The irreducible smooth representations of $\tilde{\mcl H}(G)$ then correspond to $\bx O(V)$-conjugacy classes of irreducible admissible representations of $G(K)$.

We now assume that $K$ is \na. Let $\psi\in\Psi(G^*)$ be an $A$-parameter. Using theta correspondence, Chen--Zou~\cite{C-Z21b} constructed a finite set $\tilde\Pi_\psi(G)$ of $\bx O(V)$-conjugacy classes of irreducible unitarizable representations of $G$, and a map
\begin{equation*}
\iota_{\mfk w, z}: \tilde\Pi_\psi(G)\to \Irr(\mfk S_\psi).
\end{equation*}
We refer to $\tilde\Pi_\psi(G)$ as the (ambiguous) $A$-packet associated with $\psi$, since both this packet and the map $\iota_{\mfk w, z}$ agree with Arthur's packet~\cite{Art13} when $G$ is quasisplit, see~\cite{C-Z21b}*{Theorem 8.7}. The $A$-packet $\tilde\Pi_\psi(G)$ depends only on the $\bx O(2n, \bb C)$-conjugacy class $\tilde\psi$ of $\psi$, and we may thus write it as $\tilde\Pi_{\tilde\psi}(G)$. Note that, when $G$ is quasisplit, $\tld\Pi_\psi(G)$ is originally defined as a multiset by Arthur, but it is in fact a set. When $G$ is quasi-split, this follows from Moeglin's results, as revisited for example in \cite[Theorem~8.9]{Xu17a}; see also \cite[Theorem~4.2]{C-Z21b}. When $G$ is a non-quasisplit even special orthogonal group, the corresponding multiplicity-freeness is proved in \cite[Lemma~5.1]{C-Z21b}.

We restate their result in the following form.

\begin{thm}\label{psmisemifniefms}
For each $\tilde\psi\in\tilde\Psi(G^*)$, there exists a finite set $\tilde\Pi_{\tilde\psi}(G)$ of $\varsigma$-conjugacy classes of irreducible unitarizable representations of $G(K)$, called the (ambiguous) $A$-packet associated with $\tilde\psi$. These $A$-packets satisfy the following properties:
\begin{enumerate}
\item
If $\tilde\psi\in\tilde\Psi(G^*)$ is tempered, then every $\tilde\pi\in\tilde\Pi_{\tilde\psi}(G)$ is tempered; if $\tilde\psi$ is moreover discrete, then every $\tilde\pi$ lies in the discrete series.
\item
The packet $\tilde\Pi_{\tilde\psi}(G)$ depends only on $G$, and not on the choice of inner twist $(\varrho, z)$. Given the fixed Whittaker datum $\mfk w$ of $G^*$, which induces a Whittaker datum on each standard Levi subgroup of $G^*$, there exists a canonical map
\begin{equation*}
\iota_{\mfk w, z}: \tilde\Pi_{\tilde\psi}(G)\to \Irr(\mfk S_{\tilde\psi}).
\end{equation*}
For each $\eta\in\Irr(\mfk S_{\tilde\psi})$, we write $\tilde\pi_{\mfk w, z}(\tilde\psi, \eta)$ for the direct sum (possibly zero) of elements in the fiber of $\iota_{\mfk w, z}$ over $\eta\in\Irr(\mfk S_{\tilde\psi})$, viewed as a $\tilde{\mcl H}(G)$-module.
\item(Local intertwining relations)
Suppose $\tilde\psi\in\tilde\Psi(G^*)$ satisfies $\tilde\psi^\GL=\psi_\tau+\tilde\psi_0^\GL+\psi_\tau^\vee$, where $\psi_\tau\in\Psi(\GL(d))$ corresponds to an irreducible unitarizable representation $\tau$ of $\GL(d, K)$ of Arthur type, and $\tilde\psi_0\in\tilde\Psi\paren{\SO(2n-2d)^{\disc(V)}}$. Let $P^*\le G^*$ be a standard maximal parabolic subgroup with a Levi factor
\begin{equation*}
M^*\cong \GL(d)\times\SO(2n-2d)^{\disc(V)}.
\end{equation*}
Assume that $P=\varrho(P^*)$ is a parabolic subgroup of $G$, and let $\mfk w_0$ denote the induced Whittaker datum of $\mfk w$ on $M^*$. Then for each $\eta_0\in\Irr(\mfk S_{\tilde\psi_0})$,\begin{equation*}
\bx I_P^G(\tau\boxtimes\tilde\pi_{\mfk w_0, z}(\tld\psi_0, \eta_0))=\bplus_\eta\tilde\pi_{\mfk w, z}(\tilde\psi, \eta),
\end{equation*}
as a $\tilde{\mcl H}(G)$-module, where $\eta$ runs over all characters of $\mfk S_{\tilde\psi}$ that restricts to $\eta_0$ under the natural embedding $\mfk S_{\tilde\psi_0}\inj \mfk S_{\tilde\psi}$.
\end{enumerate}
\end{thm}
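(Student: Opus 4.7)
The statement is essentially a collation of results of Arthur \cite{Art13} and Chen--Zou \cite{C-Z21b}, so the plan is to distribute the work onto those references and fill in the bookkeeping that makes the three items a coherent package.

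My first step would be to set up the packet itself. In \cite{C-Z21b}, Chen--Zou construct, via the local theta correspondence between even orthogonal groups and symplectic groups, a finite set of irreducible unitarizable representations of $\bx O(V)(K)$ attached to each $\tilde\psi\in\tilde\Psi(G^*)$. Restricting to $G(K)=\SO(V)(K)$ and passing to $\bx O(V)$-orbits produces $\tilde\Pi_{\tilde\psi}(G)$, and because $\varsigma$ can be realized by an element of determinant $-1$ in the orthogonal group (as recalled before the statement), the $\bx O(V)$- and $\varsigma$-orbits coincide. For the quasi-split case, I would invoke \cite[Theorem 8.7]{C-Z21b}, which identifies the theta-packet with Arthur's packet from \cite{Art13}. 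In particular the map $\iota_{\mfk m,z}$ is already present in Chen--Zou's construction through the chosen additive character $\uppsi_K$ and pinning.

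For item (1), I would argue that if $\tilde\psi$ is tempered, then by the characterization of $\psi$-packets via theta lifts, each member of $\tilde\Pi_{\tilde\psi}(G)$ is a theta lift of a tempered representation of a symplectic group; temperedness is preserved under theta correspondence in the relevant stable range by the results underlying \cite{C-Z21b}, and similarly discreteness of $\tilde\psi$ forces the lifted representation to be discrete series (no cuspidal support comes from proper Levis thanks to the shape of $\psi^{\GL}$ when all $m_i=1$, $I_\psi^-=J_\psi=\varnothing$). For item (2), independence from the inner twist $(\varrho,z)$ is built into the construction: two pure inner twists differ by a coboundary in $Z^1(\Gal_K,G^*)$, which translates into the same set of representations on $G(K)$, while the map $\iota_{\mfk m,z}$ is defined using $z$ to trivialize the inner twist when extracting the character of $\mfk S_{\tilde\psi}$; this is the bookkeeping of \cite{Kal16a} and matches the formalism used in \cite{C-Z21b}. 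I would double check that the image lands in $\Irr(\mfk S_{\tilde\psi})$ rather than in $\Irr(\mfk S_{\tilde\psi}^\sharp)$, which is exactly the effect of passing to $\varsigma$-orbits (i.e.\ modding out by $\det_\psi$).

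Item (3) is where the main substance lies. The plan is to use the compatibility of the theta correspondence with parabolic induction on both sides, combined with Arthur's local intertwining relations for the quasi-split Levi factor $\GL(d)\times \SO(2n-2d)^{\disc(V)}$ inside $G^*$. Concretely, I would first reduce to a maximal parabolic using transitivity of induction. For the symplectic partner I would invoke the Kudla filtration / Moeglin formula describing $\theta$ applied to a parabolically induced representation, whose top piece recovers parabolic induction on the orthogonal side from the theta lift of the inducing data. This lets me translate the desired decomposition $\bx I_P^G(\tau\boxtimes\tilde\pi_{\mfk m_0,z}(\eta_0))=\bigoplus_\eta \tilde\pi_{\mfk m,z}(\tilde\psi,\eta)$ into the analogous statement for the Atobe--Gan/Chen--Zou $\bx O$-packets, which is the content of the local intertwining relation in \cite{C-Z21b}; the relation between characters of $\mfk S_{\tilde\psi_0}$ and $\mfk S_{\tilde\psi}$ is exactly the embedding $\mfk S_{\tilde\psi_0}\hookrightarrow \mfk S_{\tilde\psi}$ coming from omitting the extra factor $\GL(d)$. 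The principal obstacle I anticipate is making sure that the normalizations (Whittaker datum $\mfk m_0$ induced from $\mfk m$, additive character $\uppsi_K$, Haar measures, and the choice of pinning for $\SO(2n-2d)^{\disc(V)}$) are compatible with Chen--Zou's theta-lift normalizations on the Levi; this is a bookkeeping task that requires care but no new ideas beyond tracking the conventions in \cite[\S 5.3]{K-S99} and \cite{C-Z21b}.
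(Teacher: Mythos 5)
Your proposal takes essentially the same route as the paper: the theorem is proved by deferring to the Chen--Zou results and doing bookkeeping. The paper's own proof is a one-line citation to \cite[Theorem A.1]{C-Z21} together with \cite[Corollary 5.4, Theorem 7.3]{C-Z21b}, which directly package all three items; you instead re-narrate what is inside those black boxes (the theta-lift construction, preservation of temperedness, the Kudla filtration argument behind the local intertwining relation). That extra exposition is not wrong, but it re-derives material that the cited Chen--Zou statements already encapsulate in exactly the form needed, so the direct citation is the economical choice. Two small remarks: the paper also invokes \cite{C-Z21} (the first Chen--Zou paper on inner forms), which you omit, and your appeal to ``temperedness is preserved under theta correspondence in the relevant stable range'' is a heuristic that Chen--Zou's packaged statement makes precise---if you go the re-derivation route you would need to pin down that point rather than wave at it.
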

\begin{proof}
This theorem follows from~\cite[Theorem A.1]{C-Z21} and~\cite[Corollary 5.4, Theorem 7.3]{C-Z21b}.
\end{proof}

Moreover, by results of Moeglin~\cite{Moe06} and Xu~\cite{Xu17a}, the structure of the $A$-packet is better understood when $\psi$ is elementary.

\begin{lm}\label{isisieiemfiems}
Suppose $(\varrho, z)$ is the trivial pure inner twist (i.e., $\varrho = \id: G^* \to G$ and $z = 1$), so that $G=G^*$. If $\psi$ is an elementary $A$-parameter for $G$, then $\iota_{\mfk w, z}$ induces a canonical bijection
\begin{equation*}
\tilde\Pi_{\tilde\psi}(G)\cong \Irr(\ovl{\mfk S}_{\tilde\psi}).
\end{equation*}
\end{lm}
\begin{proof}
This result follows from~\cite[Theorem~6.1]{Xu17a}.
\end{proof}

\section{Endoscopy theory}\label{endodslsimisemifs}

Let $F$ be a local or global field of characteristic zero. In this section, we recall the theory of endoscopy for even special orthogonal groups over $F$.

\subsection{Endoscopic triples}

We begin by recalling the definition of elliptic endoscopic triples, following~\cites{K-S99, Wal10}. For any connected reductive group $\msf G$ over $F$, an \tbf{extended endoscopic triple} for $\msf G$ is a triple $\mfk e=(\msf G^{\mfk e},\msf s^{\mfk e}, \LL\xi^{\mfk e})$, where $\msf G^{\mfk e}$ is a connected quasisplit reductive group over $F$, $\msf s^{\mfk e}\in \hat{\msf G}$ is a semisimple element, and $\LL\xi^{\mfk e}: \LL{\msf G}^{\mfk e}\to \LL{\msf G}$ is an $L$-embedding such that 
\begin{equation*}
\Ad(\msf s^{\mfk e})\circ \LL\xi^{\mfk e}=\LL\xi^{\mfk e}
\end{equation*}
and $\LL\xi^{\mfk e}(\hat{\msf G^{\mfk e}})$ is a connected component of the subgroup of $\Ad(\msf s^{\mfk e})$-fixed elements of $\hat{\msf G}$.

The triple $\mfk e$ is called \tbf{elliptic} if
\begin{equation*}
\LL\xi^{\mfk e}\paren{Z(\hat{\msf G^{\mfk e}})^{\Gal_F}}^\circ\subset Z(\hat{\msf G}),
\end{equation*}
i.e., the identity component of the Galois-invariant center of $\hat{\msf G^{\mfk e}}$ maps into the center of $\hat{\msf G}$ under $\LL\xi^{\mfk e}$. An isomorphism between two endoscopic triples $\mfk e, \mfk e'$ is an element $\msf g\in \hat{\msf G}$ such that
\begin{enumerate}
\item
$\msf g\LL\xi^{\mfk e}(\LL\msf G^{\mfk e})\msf g^{-1}=\LL\xi^{\mfk e'}(\LL\msf G^{\mfk e'})$, and
\item
$\msf g\msf s^{\mfk e}\msf g^{-1}=\msf s^{\mfk e'}$ modulo $Z(\hat{\msf G})$.
\end{enumerate}

We denote by $\mcl E(\msf G^*)$ the set of isomorphism classes of extended endoscopic triples for $\msf G^*$, and by $\mcl E_\ellip(\msf G^*)$ the subset of elliptic classes.

Suppose $\mfk e\in \mcl E(\msf G^*)$. Then for each $g^{\mfk e}\in\hat{\msf G}^{\mfk e}$, the image $\LL\xi^{\mfk e}(g^{\mfk e})\in\LL G$ defines an automorphism of $\mfk e$. This allows us to define the outer automorphism group of $\mfk e$ as
\begin{equation}\label{equirenrinfids}
\OAut(\mfk e)\defining \Aut(\mfk e)/\LL\xi^{\mfk e}(\hat{\msf G^{\mfk e}}).
\end{equation}

We now specialize to the case $G=\SO(V)$. According to~\cite[\S 1.8]{Wal10}, the isomorphism classes of elliptic extended endoscopic triples for $G$ are determined by the endoscopic group
\begin{equation*}
G^{\mfk e}=\SO(2n_1)^{\mfk d_1}\times\SO(2n_2)^{\mfk d_2},
\end{equation*}
where $n_1+n_2=n$ and $\mfk d_1\mfk d_2=\disc(V)$. Here it is required that $\mfk d_i=1$ when $n_i=0$, and $(n_i,\mfk d_i)\ne (1, 1)$ for each $i\in\{1, 2\}$.

The outer automorphism group $\OAut(\mfk e)$ is trivial unless $n_1n_2\ne 0$, in which case there exists a nontrivial outer automorphism given by simultaneous outer conjugation on both factors of $\hat{G^{\mfk e}}$. Furthermore, if $n_1=n_2$ and $\disc(V)=1$, there exists an additional nontrivial automorphism swapping the two factors. In this case, the full outer automorphism group satisfies
\begin{equation*}
\OAut(\mfk e)\cong \bb Z/2\times \bb Z/2.
\end{equation*}

\subsection{Orbital integrals}

We briefly recall some of the discussion from~\cite{Art13}*{\S 2.1}. Suppose $F=K$ is a local field and let $\msf G$ be a connected reductive group over $K$. We denote by $\msf G(K)_\sreg\subset\msf G(K)$ the open subset of strongly regular semisimple elements, i.e., regular semisimple elements whose centralizer is connected (in fact, a maximal torus).

Fix a Haar measure on $\msf G(K)$. For $\delta\in \msf G(K)_\sreg$, the \tbf{Weyl discriminant} of $\delta$ is defined as 
\begin{equation*}
D^{\msf G}(\delta)\defining \det(1-\Ad(\delta)|\mfk g/\mfk g_\delta)\in K^\times,
\end{equation*}
where $\mfk g$ and $\mfk g_\delta$ are the Lie algebras of $\msf G$ and $Z_{\msf G}(\delta)$, respectively. Fix a Haar measure on the torus $Z_{\msf G}(\delta)$, which induces a quotient measure on $Z_{\msf G}(\delta)(K)\bsh \msf G(K)$.

For $f\in \mcl H(\msf G)$ and $\delta\in \msf G(K)_\sreg$, the \tbf{normalized orbital integral} of $f$ at $\delta$ is defined as
\begin{equation*}
\Orb_\delta(f)\defining\largel{D^{\msf G}(\delta)}^{\frac{1}{2}}\int_{Z_{\msf G}(\delta)(K)\bsh \msf G(K)}f(x^{-1}\delta x)\bx dx.
\end{equation*}
If $\msf G$ is quasisplit, the normalized stable orbital integral of $f$ at $\delta$ is defined by summing over its rational conjugacy class:
\begin{equation*}
\SOrb_\delta(f)\defining\sum_{\delta'}\Orb_{\delta'}(f),
\end{equation*}
where $\delta'$ runs over a set of representatives for the $\msf G(K)$-conjugacy classes of those elements that are $\msf G(\ovl K)$-conjugate to $\delta$.

\subsection{Local transfer}\label{isiseieieniws}

In this subsection, we suppose $F=K$ is a local field and continue with the notations established in \S\ref{sieiiwiwiwms}. In particular, $G=\SO(V)$ is an even special orthogonal group, and $(G, \varrho, z)$ is a pure inner twist of the quasisplit inner form $G^*$. We have fixed a Whittaker datum $\mfk w$ for $G^*$.

Recall the subspace $\tilde{\mcl H}(G)$ of $\mcl H(G)$ consisting of $\varsigma$-invariant test functions. Similarly, for each $\mfk e\in \mcl E_\ellip(G)$, we can define the analogous subspace $\tilde{\mcl H}(G^{\mfk e})=\tilde{\mcl H}(G_1)\times \tilde{\mcl H}(G_2)$ of invariant test functions on $G^{\mfk e}$, because $G^{\mfk e}$ is a product of two (possibly trivial) even special orthogonal groups over $K$.

For each extended endoscopic triple $\mfk e\in \mcl E(G^*)$, a \tbf{transfer factor}
\begin{equation*}
\Delta[\mfk w, \mfk e, z]:G^{\mfk e}(K)_\sreg\times G(K)_\sreg\to \bb C
\end{equation*}
is defined in \cite{Kal16a}*{p.233, Equation (6)}, such that $\Delta[\mfk w, \mfk e, z]$ is a function on stable conjugacy classes of $G^{\mfk e}(K)_\sreg$ and conjugacy classes of $G(K)_\sreg$. With the transfer factor in hand, we can now recall the notion of matching test functions from~\cite[\S 5.5]{K-S99}.

\begin{defi}\label{mathcineinitenisehnis}
Let $f^{G^{\mfk e}}\in\mcl H(G^{\mfk e})$ and $f\in \mcl H(G)$. We say that $f^{G^{\mfk e}}$ and $f$ are ($\Delta[\mfk w, \mfk e, z]$-) \tbf{matching test functions} if
\begin{equation*}
\SOrb_\gamma(f^{G^{\mfk e}})=\sum_{\delta\in G(K)_\sreg/\Conj}\Delta[\mfk w, \mfk e, z](\gamma, \delta)\Orb_\delta(f)
\end{equation*}
for every $\gamma\in G^{\mfk e}(K)_\sreg$. In this case, we will also say that $f^{G^{\mfk e}}$ is a transfer of $f$ to $G^{\mfk e}$.
\end{defi}
\begin{rem}
Since the orbital integrals $\Orb_\delta(f)$ depend on the choices of measures on $G(K)$ and $Z_G(\delta)(K)$, the notion of matching test functions also depends on the choice of Haar measures on $G(K), G^{\mfk e}(K)$, and all tori in $G$ and $G^{\mfk e}$. There exists a way to synchronize the various tori; see~\cite[Remark 5.1.2]{A-K24}.
\end{rem}

We now state a theorem asserting the existence of transfers of test functions. When $K=\bb R$, this is a fundamental result of Shelstad \cites{She82, She08}. When $K$ is \na, it is the culmination of extensive work of many people, including Langlands and Shelstad~\cites{L-S87, L-S90}, Waldspurger~\cites{Wal97, Wal06}, and Ng\^o~\cite{Ngo10}.

\begin{thm}\label{sisienfnieies}
Let $f\in \mcl H(G)$ and let $\mfk e\in \mcl E_\ellip(G)$. Then there exists a transfer $f^{G^{\mfk e}}$ of $f$ to $G^{\mfk e}$. If, in addition, $K$ is \nA and $\mdc K$ is a $\varsigma$-stable hyperspecial maximal compact open subgroup of $G(K)$, then there exists a hyperspecial maximal compact open subgroup $\mdc K^{\mfk e}$ of $G^{\mfk e}(K)$ such that the characteristic function $\uno_{\mdc K^{\mfk e}}$ is a transfer of the characteristic function $\uno_{\mdc K}$ to $G^{\mfk e}$, provided that the Haar measures are chosen so that $\Vol(\mdc K)=\Vol(\mdc K^{\mfk e})=1$.

Moreover, if $f$ lies in $\tilde{\mcl H}(G)$, then there exists a transfer $f^{G^{\mfk e}}$ of $f$ to $G^{\mfk e}$ that is contained in $\tilde{\mcl H}(G^{\mfk e})$.
\end{thm}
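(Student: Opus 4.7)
The plan is to split the statement into three parts and dispatch each separately. The first part---existence of a transfer $f^{G^{\mfk e}} \in \mcl H(G^{\mfk e})$ of an arbitrary $f \in \mcl H(G)$---is a direct citation: for $K = \bb R$ this is Shelstad~\cites{She82, She08}; for non-archimedean $K$ it is the combination of the Langlands--Shelstad transfer conjecture~\cites{L-S87, L-S90}, Waldspurger's reduction~\cites{Wal97, Wal06} of it to the fundamental lemma, and Ng\^o's proof of the fundamental lemma~\cite{Ngo10}. The second part---the matching of the characteristic functions $\uno_{\mdc K}$ and $\uno_{\mdc K^{\mfk e}}$ under normalized Haar measures with $\Vol(\mdc K) = \Vol(\mdc K^{\mfk e}) = 1$---is the hyperspecial form of the fundamental lemma, together with the standard procedure for constructing the compatible hyperspecial subgroup $\mdc K^{\mfk e}$ from $\mdc K$.

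For the third part, that the transfer may be taken in $\tilde{\mcl H}(G^{\mfk e})$ whenever $f \in \tilde{\mcl H}(G)$, I would argue by averaging. Let $\varsigma^{\mfk e} = \varsigma_1 \times \varsigma_2$ be the outer automorphism of $G^{\mfk e} = \SO(V_1) \times \SO(V_2)$ induced by determinant $-1$ elements of each $\bx O(V_i)$. The key input is the equivariance
\begin{equation*}
\Delta[\mfk m, \mfk e, z]\bigl(\varsigma^{\mfk e}(\gamma), \varsigma(\delta)\bigr) = \Delta[\mfk m, \mfk e, z](\gamma, \delta)
\end{equation*}
of the Kaletha transfer factor for all strongly regular pairs $(\gamma, \delta)$. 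Granting this, if $f^{G^{\mfk e}}$ is any transfer of a $\varsigma$-invariant $f$, then the pull-back $(\varsigma^{\mfk e})^{*} f^{G^{\mfk e}}$ is again a transfer of $f$ by a direct change of variables in the matching identity of \Cref{mathcineinitenisehnis}. Consequently the symmetrization
\begin{equation*}
\tfrac{1}{2}\bigl(f^{G^{\mfk e}} + (\varsigma^{\mfk e})^{*} f^{G^{\mfk e}}\bigr) \in \tilde{\mcl H}(G^{\mfk e})
\end{equation*}
is still a transfer of $f$.

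The main obstacle will be the verification of the $(\varsigma^{\mfk e}, \varsigma)$-equivariance of the transfer factor. Since $\varsigma$ preserves the Whittaker datum $\mfk m$ by construction and $\varsigma^{\mfk e}$ preserves the induced Whittaker datum on $G^{\mfk e}$, and since $\LL\xi^{\mfk e} \circ \varsigma^{\mfk e}$ differs from $\LL\xi^{\mfk e}$ only by an inner automorphism of $\LL G$, one expects each of the Langlands--Shelstad factors $\Delta_I, \Delta_{II}, \Delta_{III}, \Delta_{IV}$ together with the pure-inner-twist correction entering Kaletha's normalization to be unchanged. The analogous identity in the quasi-split case is essentially carried out in~\cite[\S 2.1]{Art13}; the extension to the pure inner form via the cocycle $z$ should be a matter of careful bookkeeping rather than genuine difficulty.
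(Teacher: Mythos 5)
Your proposal is correct and follows the same approach as the paper: the first two parts are direct citations of Shelstad, Langlands--Shelstad, Waldspurger, and Ng\^o (which the paper states immediately before the theorem), and the third part rests on the $(\varsigma^{\mfk e}, \varsigma)$-equivariance of the transfer factor, which is precisely the ``invariant properties of the transfer factor'' the paper invokes via Ta\"ibi~\cite{Tai19}*{p.~860}. Your averaging argument makes explicit the (short) deduction that the paper leaves implicit.
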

\begin{proof}
The final assertion follows from the invariant properties of the transfer factor; see~\cite{Tai19}*{p.~860}.
\end{proof}

Similarly, when $F$ is global and $\mfk e\in \mcl E_\ellip(G)$, a function $f^{G^{\mfk e}}=\otimes_vf^{G^{\mfk e}}_v\in \tilde{\mcl H}(\mbf G^{\mfk e}(\Ade_F))\defining \otimes'_v\tld{\mcl H}(\mbf G^{\mfk e}\otimes_FF_v)$ is called an adelic transfer of $f=\otimes_vf_v\in \tilde{\mcl H}(\mbf G(\Ade_F))\defining \otimes'_v\tld{\mcl H}(\mbf G\otimes_FF_v)$ if for each place $v\in\Pla_F$, $f^{G^{\mfk e}}_v$ is a transfer of $f_v$. The existence of adelic transfer then follows from Theorem~\ref{sisienfnieies}.


\section{Adams--Johnson packets}

In this section, we recall Adams--Johnson parameters and the associated packets studied in~\cite{A-J87}, which play a key role in the globalization process. We continue with the notation introduced in \S\ref{lcoaliianisneiifs}, and assume throughout this section that $K=\bb R$ and $G=\SO(V)$.

An $A$-parameter $\psi: W_{\bb R}\times\SL(2, \bb C)\to \LL G$ is called an \tbf{Adams--Johnson parameter} if its infinitesimal character is $C$-algebraic regular in the sense of~\cite{B-G14}. This is always true if $\psi$ is tempered and discrete, although there are other non-tempered examples as well. An Adams--Johnson parameter is always discrete, but not every discrete $A$-parameter is of Adams--Johnson type. For further details, see~\cite[\S 4.2.2]{Tai17} and~\cite[\S 8.1]{AMR18}.

For each Adams--Johnson parameter $\psi$ of $G$, Adams--Johnson \cite{A-J87} constructed a packet $\Pi_\psi^{\bx{AJ}}(G)$ of irreducible admissible representations of $G(\bb R)$. Using the Whittaker datum $\mfk w$ for $G^*$ and the pure inner twist $(\varrho, z)$, Ta\"ibi~\cite[\S 3.2.2-3.2.3]{Tai19} attached to each element of $\Pi_\psi^{\bx{AJ}}(G)$ a character of $S_\psi=\mfk S_\psi$. We will omit the superscript ``AJ'', since for quasisplit $G$, the works of Arancibia, Moeglin, and Renard~\cite{AMR18} show that the Adams--Johnson packet coincides with Arthur's packet, and the two constructions of the map to $\Irr(\mfk S_\psi)$ coincide.

As in the \nA case, we may define $\varsigma$-equivalence classes of Adams--Johnson parameters, denoted $\tilde\psi$, and for each such $\tilde\psi$, there is a corresponding packet $\tilde\Pi_{\tilde\psi}(G)$ consisting of $\bx O(V)$-conjugacy classes of irreducible admissible representations of $G(\bb R)$. The following endoscopic character identity, proved by Ta\"ibi \cite[Proposition 3.2.6]{Tai19}, plays a central role.

\begin{thm}\label{ismsienfieimfws}
Assume that $G^*$ admits discrete series. Let $\psi$ be an Adams--Johnson parameter of $G$, and let $\mfk e\in \mcl E_\ellip(G)$ be an elliptic extended endoscopic triple. If $\psi^{\mfk e}$ is an $A$-parameter for $G^{\mfk e}$ such that  $\LL\xi^{\mfk e}\circ \psi^{\mfk e}=\psi$, then $\psi^{\mfk e}$ is also an Adams--Johnson parameter. Moreover, for any $f\in \tld{\mcl H}(G)$, a transfer $f^{G^{\mfk e}}$ of $f$ to $G^{\mfk e}$ may be chosen in $\tilde{\mcl H}(G^{\mfk e})$, and the following endoscopic character identity holds:
\begin{equation*}
\sum_{\tilde\pi\in\tilde\Pi_{\tilde\psi}(G)}\iota_{\mfk w, z}(\pi)(s^{\mfk e}s_\psi)\tilde\Theta_{\tilde\pi}(f)=\tilde\Theta_{\tilde\psi^{\mfk e}}(f^{G^{\mfk e}}).
\end{equation*}
\end{thm}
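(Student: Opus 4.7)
The plan is to deduce the theorem from Ta\"ibi's~\cite[Proposition 3.2.6]{Tai19} rather than re-derive it from scratch, since the statement is essentially a repackaging of his result in the conventions of this paper. Three things need to be checked: (i) $\psi^{\mfk e}$ is itself of Adams--Johnson type, (ii) a transfer $f^{G^{\mfk e}}$ of $f$ can be arranged to lie in $\tilde{\mcl H}(G^{\mfk e})$, and (iii) the pairings appearing on our right-hand side match Ta\"ibi's.

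For (i), since $\LL\xi^{\mfk e}\circ\psi^{\mfk e}=\psi$ and $\LL\xi^{\mfk e}$ identifies $\hat{G^{\mfk e}}$ with the neutral component of $Z_{\hat G}(s^{\mfk e})$, any maximal torus $T\subset\hat{G^{\mfk e}}$ is also a maximal torus of $\hat G$, and under this identification the infinitesimal character of $\psi^{\mfk e}$ coincides with that of $\psi$. The $C$-algebraic regularity condition is regularity of this infinitesimal character with respect to the roots of $\hat{G^{\mfk e}}$, which form a sub-root-system of the roots of $\hat G$, so regularity of $\psi$ forces that of $\psi^{\mfk e}$. Discreteness of $\psi^{\mfk e}$ follows analogously from the injectivity of $\LL\xi^{\mfk e}$ on centralizers.

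For (ii), \Cref{sisienfnieies} already supplies a transfer $f^{G^{\mfk e}}\in\mcl H(G^{\mfk e})$ of $f$, and the $\varsigma$-equivariance of Kaletha's transfer factor $\Delta[\mfk m,\mfk e,z]$ recorded in~\cite[p.~860]{Tai19} implies that the $\varsigma$-average of $f^{G^{\mfk e}}$ is still a transfer and now lies in $\tilde{\mcl H}(G^{\mfk e})$. With (i) and (ii) in place, the desired endoscopic character identity is~\cite[Proposition 3.2.6]{Tai19}: Ta\"ibi realizes each $\tilde\pi\in\tilde\Pi_{\tilde\psi}(G)$ as a cohomologically induced module $A_{\mfk q}(\lambda)$ from a $\theta$-stable parabolic with Levi $L$, expands $\tilde\Theta_{\tilde\pi}(f)$ via the standard character formula for cohomological induction as a signed sum of discrete-series characters of $L$, applies Shelstad's endoscopic identity on $L$, and then matches the result against the analogous Adams--Johnson expansion on $G^{\mfk e}$ (which is in that form by step (i)). The agreement of his packet with Arthur's in the quasi-split case is~\cite{AMR18}, and the passage to the general pure inner twist $(\varrho, z)$ is the cocycle bookkeeping of~\cite[\S 3.2]{Tai19}.

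The main obstacle, though essentially notational in this Archimedean setting, is item (iii): one must confirm that the pairing $\iota_{\mfk m, z}(\tilde\pi)(s^{\mfk e}s_\psi)$ used here agrees with the sign coefficient on the right-hand side of Ta\"ibi's identity. Since both $\iota_{\mfk m, z}$ and Ta\"ibi's coefficient are defined by the construction of~\cite[\S 3.2.2--3.2.3]{Tai19}, this reduces to verifying that our normalizations of $\mfk S_\psi$, $z_\psi$ and $s_\psi$ from \Cref{insiemefies} match his, and that passing from irreducible representations to $\bx O(V)$-conjugacy classes introduces no additional sign; this is purely book-keeping but requires care.
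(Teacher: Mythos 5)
Your proposal is correct and matches the paper's approach: the paper simply attributes this theorem to Ta\"ibi's Proposition~3.2.6 (together with the agreement of Adams--Johnson packets with Arthur's packets in the quasi-split case from~\cite{AMR18}), providing no independent proof. Your elaboration of the three points to check --- that $\psi^{\mfk e}$ is again of Adams--Johnson type via the sub-root-system argument, that a $\varsigma$-invariant transfer exists via the invariance of Kaletha's transfer factor recorded at~\cite[p.~860]{Tai19}, and that the normalizations of the pairing agree --- is a correct and somewhat more detailed unpacking of what is implicit in that citation.
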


\section{Arthur's multiplicity formula}

Let $F$ be a totally real number field. Let $\mbf V$ be a quadratic space of dimension $2n$ over $F$, and let $\mbf G=\SO(\mbf V)$ be the associated even special orthogonal group over $F$. Denote by $\mbf G^*$ the unique quasisplit inner form of $\mbf G$. We may fix a pure inner twisting $(\varrho, z)$ from $\mbf G^*$ to $\mbf G$, which localizes at each place $v$ of $F$ to a pure inner twist $(\varrho_v, z_v)$ from $\mbf G^*_v$ to $\mbf G_v$. We also fix a Whittaker datum $\mfk w$ for $\mbf G^*$ and the induced rational outer automorphism $\varsigma$ of $\mbf G$, cf.~\S2.1.

For each Archimedean place $\tau$ of $F$, fix a maximal compact subgroup $\mdc K_\tau$ of $\mbf G(F_\tau)$. For all but finitely many finite places $v$ of $F$, the inner twist $(\varrho_v, z_v)$ is trivial, and we fix a hyperspecial subgroup $\mdc K_v$ of $\mbf G(F_v)$ compatible with $\mfk w$. Note that $\mdc K_v$ is invariant under the outer automorphism $\varsigma$. Let $\mcl A(\mbf G)$ denote the space of automorphic forms for $\mbf G$ in the sense of~\cite{B-J79}, with respect to the fixed compact subgroup $\prod_{\tau\in\infPla_F}\mdc K_\tau$. We write $\mcl A_2(\mbf G)$ for the space of square-integrable automorphic forms for $\mbf G$.

Then $\mcl A_2(\mbf G)$ admits a decomposition as a direct sum of irreducible automorphic representations:
\begin{equation*}
\mcl A_2(\mbf G)\cong\bplus_\pi\pi^{m(\pi)},
\end{equation*}
where $\pi=\otimes_v\pi_v$ runs over the irreducible automorphic representations of $\mbf G(\Ade_F)$ occurring in the discrete spectrum, and the nonnegative integer $m(\pi)$ denotes the \tbf{automorphic multiplicity} of $\pi$.

Let $\Irr_\disc(\mbf G(\Ade_F))$ denote the set of irreducible automorphic representations of $\mbf G(\Ade_F)$ occurring in the discrete spectrum, i.e. those with $m(\pi)>0$. We also denote by $\tld{\Irr}_\disc(\mbf G(\Ade_F))$ the set of irreducible $\tilde{\mcl H}(\mbf G(\Ade_F))$-modules appearing in $\mcl A_2(\mbf G)$, where
\begin{equation*}
\tilde{\mcl H}(\mbf G(\Ade_F))\defining\bigotimes_v\nolimits'\tilde{\mcl H}(\mbf G(F_v))
\end{equation*}
is the restricted tensor product of the local Hecke algebras \wrt the characteristic function of $\mdc K_v$ (defined for all but finite many finite places $v$ of $F$).

In order to formulate his multiplicity formula, Arthur circumvented the absence of the hypothetical Langlands group by a surrogate notion of global Arthur--Langlands parameters.

Recall that a global $A$-parameter for $\mbf G$ is a formal finite sum
\begin{equation*}
\bm\psi=\sum_i\bm\psi_i\boxtimes\sp_{d_i}
\end{equation*}
where
\begin{itemize}
\item
each $\bm\psi_i$ is an irreducible self-dual cuspidal automorphic representation of $\GL(n_i; \Ade_F)$ with sign $(-1)^{d_i-1}$, in the sense of~\cite{GGP12}*{p.~94};
\item
$\sp_{d_i}$ denotes the $d_i$-dimensional irreducible representation of $\SL(2, \bb C)$;
\item
the total dimension satisfies $\sum_in_id_i=2n$; and
\item
if $\omega_i$ is the central character of $\bm\psi_i$, then $\prod_i\omega_i^{d_i}=\chi_{\mbf V}$, where $\chi_{\mbf V}$ is the quadratic character of $\mbf C_F$ associated with $\disc(\mbf V)$.
\end{itemize}
The parameter $\bm\psi$ is called \tbf{elliptic} (or \tbf{discrete}) if the pairs $(\bm\psi_i, d_i)$ are pairwise distinct. It is called \tbf{generic} (or \tbf{tempered}) if $d_i=1$ for all $i$.

Given an elliptic global $A$-parameter
\begin{equation*}
\bm\psi=\bm\psi_1\boxtimes\sp_{d_1}+\cdots+\bm\psi_k\boxtimes\sp_{d_k}
\end{equation*}
for $\mbf G$, define the formal extended component group
\begin{equation*}
\mfk S_{\bm\psi}^\sharp\defining \bplus_i(\bb Z/2)e_i,
\end{equation*}
where $e_i$ is a formal coordinate associated with the summand $\bm\psi_i\boxtimes\sp_{d_i}$. Denote the determinant character
\begin{equation*}
\det\nolimits_{\bm\psi}: \mfk S_{\bm\psi}^\sharp\to \bb Z/2, \sum_{1\le i\le k}x_ie_i\mapsto \sum_{1\le i\le k}n_ix_i.
\end{equation*}
Then the formal global component group is defined as the kernel
\begin{equation*}
\mfk S_{\bm\psi}\defining\ker(\det\nolimits_{\bm\psi}).
\end{equation*}
Moreover, define the quotient group
\begin{equation*}
\ovl{\mfk S}_{\bm\psi}\defining \mfk S_{\bm\psi}/\bra{e_1+\cdots+e_k}.
\end{equation*}
Finally, Arthur defines a canonical character $\ve_{\bm\psi}$ of $\mfk S_{\bm\psi}$ as in~\cite[Equation~(1.5.6)]{Art13}.

For each elliptic global $A$-parameter $\bm\psi$ of $\mbf G$ and each place $v\in\Pla_F$, there exists a localization $\tilde{\bm\psi}_v$---an $\bx O(2n, \bb C)$-conjugacy class of local $A$-parameters of $\mbf G_{F_v}$. Moreover, there is a natural map of component groups
\begin{equation*}
\mfk S_{\bm\psi}\to\mfk S_{\tilde{\bm\psi}_v},
\end{equation*}
as described in~\cite{C-Z24}*{p.~7}

Furthermore, associated with each local parameter $\tilde{\bm\psi}_v$, we define an $\bx O(2n, \bb C)$-conjugacy class of $L$-parameters $\tilde\phi_{\tilde{\bm\psi}_v}$ as follows: choose a representative $\bm\psi_v$ of $\tilde{\bm\psi}_v$, and define
\begin{equation*}
\phi_{\bm\psi_v}(w)=\bm\psi_v\paren{w, \begin{bmatrix}\largel{w}^{\frac{1}{2}} &\\  &\largel{w}^{-\frac{1}{2}}\end{bmatrix}},
\end{equation*}
which is an $L$-parameter for $\mbf G_{F_v}$. The conjugacy class $\tilde\phi_{\tilde{\bm\psi}_v}$ is independent of the choice of representative $\bm\psi_v$.

We now recall the ``weak form'' of ``Arthur's multiplicity formula'', as established by Chen--Zou~\cite{C-Z24}. In general, this formula has been established only for elliptic generic global $A$-parameters. However, if $\mbf G$ is either quasisplit or has $F$-rank at most one, then the result holds for all elliptic $A$-parameters. 

\begin{thm}\label{mianieijidinfies}
Suppose $\mbf G$ is either quasisplit or has $F$-rank at most one, and let $\mfk w$ be a Whittaker datum for $\mbf G^*$. If $\bm\psi$ is an elliptic global $A$-parameter for $\mbf G$, we denote by $\mcl A_{2, \bm\psi}(\mbf G)$ the direct sum of irreducible admissible representations $\pi$ in $\mcl A_2(\mbf G)$ \sut the $L$-parameter of $\pi_v$ is $\tilde\phi_{\tilde{\bm\psi}_v}$ for almost every place of $F$. Then there is a natural decomposition of $\tilde{\mcl H}(\mbf G(\Ade_F))$-modules
\begin{equation*}
\mcl A_2(\mbf G)=\bplus_{\mbf\psi}\mcl A_{2, \bm\psi}(\mbf G),
\end{equation*}
where $\mbf\psi$ runs through elliptic global $A$-parameters for $\mbf G$.

Furthermore, for reach elliptic global $A$-parameter $\bm\psi=\bm\psi_1\boxtimes\sp_{d_1}+\cdots+\bm\psi_k\boxtimes\sp_{d_k}$ for $\mbf G$, there exists a natural diagonal map
\begin{equation*}
\Delta: \mfk S_{\bm\psi}\to \mfk S_{\bm\psi, \Ade_F}\defining \prod_{v\in\Pla_F}\mfk S_{\tilde{\bm\psi}_v},
\end{equation*}
and a decomposition of $\tilde{\mcl H}(\mbf G(\Ade_F))$-modules:
\begin{equation*}
\mcl A_{2, \bm\psi}(\mbf G(F)\bsh\mbf G(\Ade_F))\cong m_{\bm\psi}\bplus_{\substack{\eta\in\Irr(\mfk S_{\bm\psi, \Ade_F})\\ \Delta^*(\eta)=\ve_{\bm\psi}}}\tilde\pi_{\mfk w, z}(\eta),
\end{equation*}
where each $\tilde\pi_{\mfk w, z}(\eta)=\otimes_v\tilde\pi_{\mfk w_v, z_v}(\tld\psi_v, \eta_v)$ is the global restricted tensor product of local representations (which is not necessarily irreducible). Moreover, the multiplicity $m_{\bm\psi}$ satisfies $m_{\bm\psi}=1$ unless $n_id_i$ is even for each $i$, in which case $m_{\bm\psi}=2$.
\end{thm}
\begin{proof}
This result follows from~\cite[Theorem~1.5.2]{Art13} when $\mbf G$ is quasisplit, and from~\cite[Theorem~2.1, Theorem 7.7]{C-Z24} when $\mbf G$ has $F$-rank at most one. Although Chen--Zou~\cite{C-Z24} states the theorem for the full orthogonal group $\bx O(\mbf V)$, the argument in~\cite[Chapter 7]{A-G17} implies the corresponding result for the special orthogonal group $\SO(\mbf V)$.
\end{proof}

\section{Stable trace formula}

In this section, we recall the stable trace formula for the global even orthogonal group $\mbf G$.

For each extended endoscopic triple $\mfk e\in \mcl E(\mbf G)$, denote
\begin{equation*}
\iota(\mfk e)=\tau(\mbf G)\tau(\mbf G^{\mfk e})^{-1}\#\OAut(\mfk e)^{-1}\in\bb Q
\end{equation*}
to be the global coefficient introduced by Kottwitz and Shelstad, cf.~\cite{Art13}*{Equation (3.2.4)}. Here $\tau(-)$ denotes the Tamagawa number. For instance, $\iota(\mfk e)=1$ if $\mbf G^{\mfk e}=\mbf G^*$, and $\iota(\mfk e)=1/4$ if $\mbf G^{\mfk e}=\SO(2n_1)^{\mfk d_1}\times \SO(2n_2)^{\mfk d_2}$ with $(n_1, \mfk d_1)\ne (n_2, \mfk d_2)$. 

\begin{thm}[Discrete part of stable trace formula, \cite{Art13}*{p. 135}]\label{stiamienidsiniw}
There is a spectral decomposition
\begin{equation}\label{bisnsiefneifs}
\bx I_{\disc}^{\mbf G}(f)=\sum_{\mfk e\in\mcl E_\ellip(\mbf G)}\iota(\mfk e)\ST_{\disc}^{\mbf G^{\mfk e}}(f^{\mbf G^{\mfk e}})
\end{equation}
for any $f^{\mbf G^{\mfk e}}$ that is matching functions with $f$. Here
\begin{equation*}
\bx I_{\disc}^{\mbf G}(f)=\sum_{\mbf M}\frac{1}{\#W(\mbf G,\mbf M)}\sum_{s\in W(\mbf G,\mbf M)_\reg}\frac{1}{\det(s-1)_{\mfk a_{\mbf M}^{\mbf G}}}\tr\paren{M_{\mbf P}(s, 0)\mcl I_{\disc}^P(0, f)},
\end{equation*}
where $\mbf M$ runs through standard Levi subgroups of $\mbf G$, cf. \cite{Art13}*{Page 124}, and $\ST_\disc^{\mbf G^{\mfk e}}(f^{\mbf G^{\mfk e}})$ is the stable linear form on $\mbf G^{\mfk e}$ appearing in the discrete part of Arthur's stabilization of the trace formula.
\end{thm}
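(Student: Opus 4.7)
The plan is to invoke Arthur's stabilization of the invariant trace formula, suitably extended to the non-quasi-split case by Chen--Zou. The identity~\eqref{bisnsiefneifs} is a formal consequence of three ingredients: the definition of $\bx I_{\disc}^{\mbf G}(f)$ as the discrete spectral component isolated from Arthur's invariant trace formula; the stabilization of the geometric expansion using Langlands--Shelstad transfer (\Cref{sisienfnieies}) together with the fundamental lemma of Ng\^o; and an inductive definition of the stable distributions $\ST_{\disc}^{\mbf G^{\mfk e}}$ for the various elliptic endoscopic data.

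First I would recall Arthur's inductive strategy: proceed by induction on $\dim \mbf G^{\mfk e}$. For every proper elliptic endoscopic triple $\mfk e\ne \mfk e^*$, the stable distribution $\ST_{\disc}^{\mbf G^{\mfk e}}$ is available by the induction hypothesis, and one \emph{defines} the contribution of the trivial endoscopic datum $\mfk e^*$ (where $\mbf G^{\mfk e^*}=\mbf G^*$) through the identity~\eqref{bisnsiefneifs} itself. The nontrivial content is then that this leftover distribution is stable, which is precisely Arthur's Global Theorem~1 and follows from the matching of geometric and spectral expansions after stabilization. The global Kottwitz--Shelstad coefficients $\iota(\mfk e)$ emerge from tracking rational orbit counts and Tamagawa measures in the stabilization of the geometric side, exactly as in~\cite{Art13}*{\S3.2}.

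The main obstacle is that Arthur's book~\cite{Art13} treats quasi-split classical groups, whereas $\mbf G$ here is a possibly non-quasi-split inner form of $\mbf G^*$. This extension has been carried out by Chen--Zou~\cite{C-Z24}, who adapt Arthur's inductive stabilization procedure to the non-quasi-split even orthogonal setting using theta correspondence and the general stabilization framework of Kottwitz--Shelstad~\cite{K-S99}. My proof would therefore consist of citing~\cite{Art13}*{Chapter 3} for the quasi-split case, invoking~\cite{C-Z24} for the non-quasi-split extension, and verifying that the transfer factors, matching test functions, and endoscopic coefficients $\iota(\mfk e)$ agree with the conventions fixed in~\Cref{endodslsimisemifs} and~\Cref{isiseieieniws}.
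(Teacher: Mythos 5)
The paper does not give a proof of this statement; it is invoked verbatim as a known result from Arthur's book, which is why the theorem header carries the citation \cite{Art13}*{p.~135}. So there is no ``paper's own proof'' to compare against.

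That said, your sketch of the proof strategy has one substantive error worth flagging. The stabilization identity~\eqref{bisnsiefneifs} is \emph{not} something that Chen--Zou had to extend to the non-quasi-split case. Arthur's stable trace formula papers establish this identity for an arbitrary connected reductive group $\mbf G$ over a number field (conditional on the fundamental lemma and weighted fundamental lemma, both of which are now theorems thanks to Ng\^o, Waldspurger, and Chaudouard--Laumon). The group $\mbf G$ on the left side may be any inner form; only the endoscopic groups $\mbf G^{\mfk e}$ appearing on the right side need to be quasi-split, and they automatically are because endoscopic data involve quasi-split groups by definition. Arthur's book reviews this general stabilization in Chapter~3, specialized there to his classical groups, but the underlying theorem was never restricted to quasi-split $\mbf G$. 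Chen--Zou's contributions lie elsewhere: they construct local $A$-packets for non-quasi-split even orthogonal groups via theta correspondence and prove Arthur's multiplicity formula for those groups. Those are the ingredients used in \Cref{psmisemifniefms} and \Cref{mianieijidinfies}, not in the present statement. The opening sentence of your proposal (``suitably extended to the non-quasi-split case by Chen--Zou'') and the third paragraph should therefore be replaced by a direct citation of Arthur's stabilization for general $\mbf G$, together with a remark that the stable linear forms $\ST_{\disc}^{\mbf G^{\mfk e}}$ are defined inductively on the quasi-split endoscopic groups exactly as you describe.
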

\begin{rem}
We refer the reader to \cite{Art13}*{Chapter 3} for the precise definitions of other terms in the right-hand side of the formula above. For our purpose, we simply note that the term corresponding to $\mbf M=\mbf G$ is
the sum of $m(\pi)\tr(\pi(f))$, where $\pi$ runs over the set of discrete automorphic representations $\pi$ of $\mbf G$ with formal parameter $\psi$.

Note that there are only finitely many isomorphism classes of elliptic endoscopic data unramified outside a given finite set of places of $F$ by \cite{Lan83}. Combining with \cite{Kot86}*{Proposition 7.5}, this implies that there are only finitely many non-zero terms on the right-hand side of Equation~\eqref{bisnsiefneifs}.
\end{rem}

\section{Quasi-split case}\label{lslsieieurheifmeis}

Our main result, Theorem~\ref{thm main}, was established by Arthur~\cite[Theorem 2.2.1]{Art13} when $K$ is non-Archimedean and $G$ is quasisplit. In fact, Arthur's result holds without the assumption that $\psi$ be tempered, and the analogous statement for Archimedean local fields is also available. Moreover, one may deduce the following stable multiplicity formula for any (not necessarily quasisplit) global special orthogonal group $\mbf G=\SO(\mbf V)$ over $F$.

For any elliptic endoscopic triple $\mfk e$ for $\mbf G$, the endoscopic group $\mbf G^{\mfk e}=\mbf G_1\times \mbf G_2$ is a product of two (possibly trivial) even special orthogonal groups over $F$. For each elliptic parameter $\bm\psi$ for $\mbf G$, we write $\Psi(\mbf G^{\mfk e}, \bm\psi)$ for the set of pairs of elliptic parameters $(\bm\psi_1, \bm\psi_2)$ for $\mbf G_1$ and $\mbf G_2$ such that  $\bm\psi=\bm\psi_1+\bm\psi_2$ as a formal sum.

\begin{thm}[Stable multiplicity formula, \cite{Art13}*{Corollary 4.1.3}]\label{stablelmultlieisiifomr}
Let $\mfk e$ be an elliptic endoscopic triple for $\mbf G$, and let $\bm\psi$ be an elliptic $A$-parameter for $\mbf G$. For any function $f^{\mbf G^{\mfk e}}=\otimes_vf^{\mbf G^{\mfk e}}_ v\in\tilde{\mcl H}(\mbf G^{\mfk e})$, we have
\begin{equation}
\ST^{\mbf G^{\mfk e}}_{\disc, \bm\psi}(f^{\mbf G^{\mfk e}})=\sum_{\bm\psi^{\mfk e}\in\Psi(\mbf G^{\mfk e}, \bm\psi)}\frac{m_{\bm\psi^{\mfk e}}\ve_{\bm\psi^{\mfk e}}(s_{\bm\psi^{\mfk e}})}{\#\ovl{\mfk S}_{\bm\psi^{\mfk e}}}\prod_v\tilde\Theta_{\tilde{\bm\psi}^{\mfk e}_v}(f_v^{\mbf G^{\mfk e}}).
\end{equation}
Here $m_{\bm\psi^{\mfk e}}, \#\ovl{\mfk S}_{\bm\psi^{\mfk e}}$ and $\ve_{\bm\psi^{\mfk e}}$ are defined as follows: If $\mbf G^{\mfk e}=\prod_i\mbf G_i$ is a product of special orthogonal groups and $\bm\psi^{\mfk e}=\prod_i\bm\psi_i$ where $\bm\psi_i$ is a global $A$-parameter for $\mbf G_i$ for each $i$, then
\begin{equation*}
m_{\bm\psi^{\mfk e}}\defining\prod_im_{\bm\psi_i}, \quad \#\ovl{\mfk S}_{\bm\psi^{\mfk e}}\defining\prod_i\#\ovl{\mfk S}_{\bm\psi_i}, \quad \ve_{\bm\psi^{\mfk e}}(s_{\bm\psi^{\mfk e}})\defining\prod_i\ve_{\bm\psi_i}(s_{\bm\psi_i}).
\end{equation*}
\end{thm}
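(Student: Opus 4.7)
The plan is to reduce the identity to Arthur's stable multiplicity formula for quasi-split even special orthogonal groups [Art13, Theorem 4.1.2, Corollary 4.1.3] by exploiting that every elliptic endoscopic group of $\mbf G$ is a product of quasi-split even special orthogonal groups.

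First I would use the decomposition $\mbf G^{\mfk e} = \mbf G_1 \times \mbf G_2$ recorded in Section 3, where each $\mbf G_i$ is a (possibly trivial) quasi-split even special orthogonal group over $F$. Correspondingly $\tilde{\mcl H}(\mbf G^{\mfk e}) = \tilde{\mcl H}(\mbf G_1)\otimes\tilde{\mcl H}(\mbf G_2)$ by definition, so by bilinearity it suffices to check the identity on pure tensors $f^{\mbf G^{\mfk e}} = f_1\otimes f_2$. By definition of $\Psi(\mbf G^{\mfk e}, \bm\psi)$, every parameter $\bm\psi^{\mfk e}$ of $\mbf G^{\mfk e}$ lifting $\bm\psi$ corresponds to an ordered pair $(\bm\psi_1, \bm\psi_2)$ with $\bm\psi_1 + \bm\psi_2 = \bm\psi$, and the stable distribution on the product group factors multiplicatively:
\begin{equation*}
\ST^{\mbf G^{\mfk e}}_{\disc, \bm\psi^{\mfk e}}(f_1 \otimes f_2) = \ST^{\mbf G_1}_{\disc, \bm\psi_1}(f_1) \cdot \ST^{\mbf G_2}_{\disc, \bm\psi_2}(f_2),
\end{equation*}
which is a direct consequence of the multiplicativity of stable orbital integrals and of the spectral decomposition on a direct product of groups.

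Next I would apply Arthur's stable multiplicity formula to each quasi-split factor:
\begin{equation*}
\ST^{\mbf G_i}_{\disc, \bm\psi_i}(f_i) = \frac{m_{\bm\psi_i}\,\ve_{\bm\psi_i}(s_{\bm\psi_i})}{\#\ovl{\mfk S}_{\bm\psi_i}}\prod_v \Theta_{\tilde{\bm\psi}_{i,v}}(f_{i,v}).
\end{equation*}
Multiplying these two identities, then summing over all factorizations $\bm\psi^{\mfk e} = (\bm\psi_1, \bm\psi_2) \in \Psi(\mbf G^{\mfk e}, \bm\psi)$, and invoking the product definitions $m_{\bm\psi^{\mfk e}} = \prod_i m_{\bm\psi_i}$, $\#\ovl{\mfk S}_{\bm\psi^{\mfk e}} = \prod_i \#\ovl{\mfk S}_{\bm\psi_i}$, $\ve_{\bm\psi^{\mfk e}}(s_{\bm\psi^{\mfk e}}) = \prod_i \ve_{\bm\psi_i}(s_{\bm\psi_i})$ given in the statement, recovers the desired expression.

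The main point requiring care is compatibility with the $\varsigma$-symmetrization: Arthur's theorem is formulated in terms of stable distributions on the subspaces $\tilde{\mcl H}(\mbf G_i)$ rather than the full Hecke algebras, and the local characters $\Theta_{\tilde{\bm\psi}_{i,v}}$ must be interpreted on $\bx O$-orbits of representations, matching the conventions of Section 2. Since Arthur constructs this entire framework simultaneously in [Art13, Chapter 4], no new analytic input is required beyond verifying that the product structures of $\tilde{\mcl H}$, $\Psi$, and $\mfk S$ on $\mbf G^{\mfk e}$ are correctly propagated through the factorization; the proof then amounts to unwinding definitions and assembling two instances of Arthur's formula.
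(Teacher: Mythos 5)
The paper gives no proof of this theorem; it is attributed directly to Arthur~[Art13, Theorem 4.1.2, Corollary 4.1.3]. Your derivation --- decomposing $\mbf G^{\mfk e}$ into quasi-split factors $\mbf G_1\times\mbf G_2$, applying Arthur's stable multiplicity formula to each factor, multiplying, and summing over factorizations $\bm\psi^{\mfk e}\in\Psi(\mbf G^{\mfk e},\bm\psi)$ --- is indeed the natural way to unwind the citation, and the multiplicativity of $\ST_{\disc}$, $m_{\bm\psi^{\mfk e}}$, $\ovl{\mfk S}_{\bm\psi^{\mfk e}}$, $\ve_{\bm\psi^{\mfk e}}$ across the two factors is straightforward as you say. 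Two points you gloss over are worth making explicit to close the argument: (i) the decomposition of the $\bm\psi$-part of the stable distribution for $\mbf G^{\mfk e}$ into $\bm\psi^{\mfk e}$-pieces, namely that $\ST^{\mbf G^{\mfk e}}_{\disc,\bm\psi}$ is the sum of $\ST^{\mbf G^{\mfk e}}_{\disc,\bm\psi^{\mfk e}}$ over all parameters $\bm\psi^{\mfk e}$ of $\mbf G^{\mfk e}$ mapping under $\LL\xi^{\mfk e}$ to (the Hecke family of) $\bm\psi$; and (ii) that the set $\Psi(\mbf G^{\mfk e},\bm\psi)$ consists of \emph{elliptic} pairs only, whereas a priori non-elliptic parameters of $\mbf G^{\mfk e}$ also lift $\bm\psi$ --- Arthur's Theorem~4.1.2 handles this because the coefficient $\sigma(\ovl S^0_{\bm\psi^{\mfk e}})$ vanishes unless $\bm\psi^{\mfk e}$ is elliptic, reducing the sum to the stated one. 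With those two remarks added, your argument correctly recovers the theorem from Arthur's results and is consistent with the paper's (implicit) justification.
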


As a corollary of Theoerm~\ref{stablelmultlieisiifomr}, we have the following simple stable trace formula for any even special orthogonal group over a totally real number field $F$.

\begin{cor}\label{simplestaleitneirnies}
Suppose $\mbf G$ is either quasisplit or has $F$-rank $\le 1$. Let $\bm\psi$ be an elliptic $A$-parameter of $\mbf G$ such that, at each real place of $F$, its localization is an Adams--Johnson parameter. Then, for any $f=\otimes_vf_v\in\tilde{\mcl H}(\mbf G(\Ade_F))$,
\begin{equation}\label{ismsiniefmeis}
m_{\bm\psi}\sum_{\substack{\otimes_v\eta_v\in\Irr(\mfk S_{\bm\psi, \Ade_F})\\ \Delta^*(\otimes_v\eta_v)=\ve_{\bm\psi}}}\prod_v\tilde\Theta_{\tilde\pi^{\mbf G}_{\eta_v}}(f_v)=\sum_{\mfk e\in\mcl E_\ellip(\mbf G)}\iota(\mfk e)\sum_{\bm\psi^{\mfk e}\in\Psi(\mbf G^{\mfk e}, \bm\psi)}\frac{m_{\bm\psi^{\mfk e}}\ve_{\bm\psi^{\mfk e}}(s_{\bm\psi^{\mfk e}})}{\#\ovl{\mfk S}_{\bm\psi^{\mfk e}}}\prod_v\tilde\Theta_{\tilde{\bm\psi}^{\mfk e}_v}(f_v^{\mbf G^{\mfk e}}),
\end{equation}
Here $f^{\mbf G^{\mfk e}}=\otimes_vf^{\mbf G^{\mfk e}}_ v\in\tilde{\mcl H}(\mbf G^{\mfk e})$ are adelic transfers of $f$.
\end{cor}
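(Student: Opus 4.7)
The plan is to deduce the simple stable trace formula by combining three inputs that have just been recorded in the previous sections: the discrete part of the stable trace formula (\Cref{stiamienidsiniw}), Arthur's multiplicity formula (\Cref{mianieijidinfies}), and the stable multiplicity formula (\Cref{stablelmultlieisiifomr}). The strategy is to decompose both sides of the stable trace formula according to the elliptic global $A$-parameter $\bm\psi$, and then rewrite each $\bm\psi$-isotypic piece in the spectral form appearing in \eqref{ismsiniefmeis}.

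First I would invoke \Cref{stiamienidsiniw} with the adelic transfers $f^{\mbf G^{\mfk e}}$ of $f$ to write
\begin{equation*}
\bx I_{\disc}^{\mbf G}(f) \;=\; \sum_{\mfk e\in\mcl E_\ellip(\mbf G)} \iota(\mfk e)\, \ST_{\disc}^{\mbf G^{\mfk e}}(f^{\mbf G^{\mfk e}}).
\end{equation*}
Both sides admit a canonical decomposition indexed by elliptic global $A$-parameters $\bm\psi$ of $\mbf G$: on the right, we group the inner sum of \Cref{stablelmultlieisiifomr} over $\bm\psi^{\mfk e}\in\Psi(\mbf G^{\mfk e},\bm\psi)$ for a fixed $\bm\psi$; on the left, we use the standard $\bm\psi$-isotypic decomposition of $\bx I_\disc^{\mbf G}$. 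Since adelic transfer preserves the parameter-wise decomposition, extracting the $\bm\psi$-component gives the identity
\begin{equation*}
\bx I_{\disc,\bm\psi}^{\mbf G}(f) \;=\; \sum_{\mfk e\in\mcl E_\ellip(\mbf G)} \iota(\mfk e)\, \ST_{\disc,\bm\psi}^{\mbf G^{\mfk e}}(f^{\mbf G^{\mfk e}}).
\end{equation*}

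On the endoscopic side, applying \Cref{stablelmultlieisiifomr} term by term produces precisely the right-hand side of \eqref{ismsiniefmeis}. For the spectral side, ellipticity of $\bm\psi$ forces the Levi sum in the formula for $\bx I_{\disc,\bm\psi}^{\mbf G}(f)$ to reduce to the $\mbf M = \mbf G$ term: the contributions from proper Levi subgroups correspond to parameters that factor properly through $\mbf M$, and by the argument in \cite{Art13}*{\S 4.1} these vanish on the $\bm\psi$-isotypic component when $\bm\psi$ is elliptic. The surviving $\mbf M = \mbf G$ term is $\sum_\pi m(\pi)\tr(\pi(f))$, with $\pi$ ranging over discrete automorphic representations of global parameter $\bm\psi$. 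Under the hypothesis that $\mbf G$ is quasi-split or has $F$-rank $\le 1$, this is exactly the setting covered by \Cref{mianieijidinfies}, which rewrites the sum as $m_{\bm\psi}\sum_{\eta}\prod_v \Theta_{\tilde\pi^{\mbf G}_{\eta_v}}(f_v)$ over $\eta = \otimes_v\eta_v\in\Irr(\mfk S_{\bm\psi,\Ade_F})$ with $\Delta^*(\eta)=\ve_{\bm\psi}$. Matching both sides of the $\bm\psi$-isotypic identity yields \eqref{ismsiniefmeis}.

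The main obstacle I anticipate is the careful justification that only the $\mbf M=\mbf G$ term contributes on the spectral side for elliptic $\bm\psi$. Even though $\bm\psi$ may formally factor through a proper Levi in various ways (e.g., the summands $\bm\psi_i\boxtimes\sp_{d_i}$ could be redistributed), the regularity condition on the Weyl elements $s\in W(\mbf G,\mbf M)_\reg$ combined with ellipticity of $\bm\psi$ eliminates these contributions; this reduction is standard in Arthur's framework but must be cited with care. Once this vanishing is in hand, the remainder of the argument is a direct bookkeeping check matching coefficients across the two sides.
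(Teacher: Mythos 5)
Your proof is correct and follows essentially the same route as the paper: extract the $\bm\psi$-component of the stable trace formula (\Cref{stiamienidsiniw}), apply the stable multiplicity formula (\Cref{stablelmultlieisiifomr}) on the endoscopic side, reduce the spectral side to the $\mbf M=\mbf G$ term, and then invoke Arthur's multiplicity formula (\Cref{mianieijidinfies}). The one place where your reasoning diverges from the paper's is in justifying why the proper Levi terms drop out of $\bx I_{\disc,\bm\psi}^{\mbf G}(f)$: you argue via the regularity condition on $s\in W(\mbf G,\mbf M)_\reg$ together with ellipticity of $\bm\psi$, whereas the paper notes that any $s$-invariant $\sigma$ contributing to a Levi term would necessarily produce a representation of $\mbf G(\Ade_F)$ with non-regular infinitesimal character at the infinite places. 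These are two faces of the same observation (an $s$-fixed parameter of a proper Levi inducts up to a parameter with repeated constituents, hence non-elliptic and with non-regular archimedean infinitesimal character), and either phrasing is a standard piece of Arthur's framework, so no gap results.
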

\begin{proof}
It follows from Theorem~\ref{stiamienidsiniw} and Arthur's standard procedure on extracting the $\psi$-part of the stable trace formula (cf.  \cite{Art13}*{Chapter 3}) that
\begin{equation}
\bx I^{\mbf G}_{\disc, \bm\psi}(f)=\sum_{\mfk e\in \mcl E_\ellip(\mbf G)}\iota(\mfk e)\ST_{\disc, \bm\psi}^{\mbf G^{\mfk e}}(f^{\mbf G^{\mfk e}}).
\end{equation}
Next, we claim $\bx I^{\mbf G}_{\disc, \bm\psi}(f)$ is the sum of $m(\pi)\tr(\pi(f))$ for $\pi$ running over the set of discrete automorphic representations $\pi$ of $\mbf G$ with global $A$-parameter $\bm\psi$. This is true because the contributions from properly-contained Levi subgroups $\mbf M$ of $\mbf G$ only involve representations of $\mbf G(\Ade_F)$ with non-regular infinitesimal character at all infinite places of $F$.

It then follows from Arthur's multiplicity formula Theorem~\ref{mianieijidinfies} that 
\begin{equation}
\bx I^{\mbf G}_{\disc, \bm\psi}(f)=m_{\bm\psi}\sum_{\substack{\otimes_v\eta_v\in\Irr(\mfk S_{\bm\psi, \Ade_F})\\ \Delta^*(\otimes_v\eta_v)=\ve_{\bm\psi}}}\prod_v\tilde\Theta_{\tilde\pi^{\mbf G}_{\eta_v}}(f_v).
\end{equation}
On the other hand, it follows from Theorem~\ref{stablelmultlieisiifomr} that
\begin{equation}
\ST^{\mbf G^{\mfk e}}_{\disc, \bm\psi}(f^{\mbf G^{\mfk e}})=\sum_{\psi^{\mfk e}\in\Psi(\mbf G^{\mfk e}, \bm\psi)}\frac{m_{\bm\psi^{\mfk e}}\ve_{\bm\psi^{\mfk e}}(s_{\bm\psi^{\mfk e}})}{\#\ovl{\mfk S}_{\bm\psi^{\mfk e}}}\prod_v\tilde\Theta_{\tilde{\bm\psi}^{\mfk e}_v}(f_v^{\mbf G^{\mfk e}}).
\end{equation}
So the assertion follows.
\end{proof}

\section{Discrete case}\label{lsisieiutythrenfmess}

In this section, we prove the main theorem when $\psi^{\mfk e}$ is discrete and good. In particular, we can write $\psi^{\mfk e}=\psi_1+\psi_2$, and write
\begin{equation*}
\psi^\GL_1=\phi_1\boxtimes\sp_{a_1}+\cdots+\phi_r\boxtimes\sp_{a_r}, \quad \psi^\GL_2=\phi_{r+1}\boxtimes\sp_{a_{r+1}}+\cdots+\phi_k\boxtimes\sp_{a_k},
\end{equation*}
where $a_i$ is odd for each $1\le i\le k$, and
\begin{equation*}
a_1<a_2\cdots<a_r, \quad a_{r+1}<a_{r+2}<\cdots<a_k. 
\end{equation*}
Here we have grouped together the summands with the same $\SL_2$-factor, so that the integers $a_i$ are pairwise distinct. We assume that $G$ is not quasisplit. In particular, $n\ge 2$ and $\disc(V)=1$. To prove the main theorem, firstly we globalize the local field $K$ to a totally real number field:

\begin{lm}\label{sisisenfiesm}
For any non-Archimedean local field $K$, there exists a totally real number field $F$ with finite places $v_0, v_1\in \fPla_F$ such that  $F_{v_0}\cong F_{v_1}\cong K$.
\end{lm}
\begin{proof}
This is an easy application of Krasner's lemma, see for example the proof of \cite{Art13}*{Lemma 6.2.1} and \cite{Ish24}*{Lemma 6.2}.
\end{proof}

Fix such a totally real number field $F$ as in Lemma~\ref{sisisenfiesm}, together with a fixed embedding $\tau_0: F\to \bb R$. We now globalize the endoscopic datum $\mfk e$. Suppose $G^{\mfk e}=\SO(2n')^{\mfk d}\times\SO(2n'')^{\mfk d}$ with $n'\ge n''\ge 0$. Note that each $\phi_i$ is a discrete tempered $A$-parameter, i.e. a discrete $L$-parameter, for some even orthogonal group $\SO(2n_i)^{\mfk d_i}$, such that 
\begin{equation*}
\prod_{i=1}^r\mfk d_i=\prod_{i=r+1}^{k}\mfk d_i=\mfk d.
\end{equation*}

We choose elements $\mfk d_1^0, \ldots, \mfk d_k^0\in K^\times/(K^\times)^2$ and pairwise nonisomorphic local simple supercuspidal $L$-parameters $\phi^0_i$ for $\SO(2n_i)^{\mfk d_i^0}$ for $1\le i\le k$. This is possible by \cite[Theorem~C.3]{C-Z24}. Then we choose elements $\mfk D_1, \ldots, \mfk D_k\in F^\times$ 
such that  
\begin{enumerate}
\item
$\mfk D_i=\mfk d_i\in F_{v_1}^\times/(F_{v_1}^\times)^2$ for each $1\le i\le k$;
\item
$\mfk D_i=\mfk d_i^0\in F_{v_0}^\times/(F_{v_0}^\times)^2$ for each $1\le i\le k$;
\item
$(-1)^{n_i}\mfk D_i$ is totally positive for each $1\le i\le k$;
\item
For any two finite subsets $S, T$ of $\{1, 2,\ldots, k\}$, if $\prod_{i\in S}\mfk D_i=\prod_{i\in T}\mfk D_i\in F^\times/(F^\times)^2$, then $S=T$.
\end{enumerate}
This is possible by the weak approximation theorem.

We define
\begin{equation*}
\mfk D'\defining \prod_{i=1}^r\mfk D_i, \quad \mfk D''\defining \prod_{i=r+1}^k\mfk D_i,
\end{equation*}
and choose a quadratic space $\mbf V$ over $F$ of rank $2n$ with discriminant $\mfk D=\mfk D'\mfk D''$, such that 
\begin{enumerate}
\item
$\mbf V$ is positive definite at each real place of $F$ except for $\tau_0$; 
\item
For each finite place $v$ of $F$ except for $v=v_1$, the Hasse--Witt invariant $\eps(\mbf V\otimes_FF_v)$ is $1$; and
\item
$\eps(\mbf V\otimes_FF_{v_1})=-1$.
\end{enumerate}
Note that such a quadratic space exists by the Hasse--Minkowski theorem \cite{Gro21}*{Theorem 2.1}. We write $\mbf G=\SO(\mbf V)$. Then there exists an elliptic endoscopic datum $\mfk E$ for $\mbf G$ such that  $\mbf G^{\mfk E}\cong \SO(2n')^{\mfk D'}\times \SO(2n'')^{\mfk D''}$.

By the choices of $\mfk D_1, \ldots, \mfk D_k$, we have achieved the following:
\begin{enumerate}
\item
$\mbf G$ is anisotropic;
\item
Both $\mbf G^{\mfk E}(F\otimes\bb R)$ and $\mbf G(F\otimes\bb R)$ admit representations in the discrete series; and 
\item
$\mbf G\otimes_FF_v$ is quasisplit for each finite place $v$ of $F$ except for $v_1$, and $\mbf G\otimes_FF_{v_1}\cong G$.
\end{enumerate}
Let $\mbf G^*=\SO(2n)^{\mfk D}$ be the unique quasisplit inner form of $\mbf G$. We may fix a pure inner twisting $(\varrho, z)$ between $\mbf G^*$ and $\mbf G$, which localizes to a pure inner twist $(\varrho_v, z_v)$ between $\mbf G^*_v$ and $\mbf G_v$ for each place $v$ of $F$. We further assume that $(\varrho_{v_0}, z_{v_0})$ is a trivial pure inner twist.

We now globalize the local $A$-parameter $\psi^{\mfk e}$. This is a standard step; see, for example, \cite[Corollary~6.2.3]{Art13}, \cite[Remark~3 after Corollary~6.2.4]{Art13}, and \cite[Lemma~4.3.1]{KMSW}.

\begin{lm}\label{glaobidninssAprieries}
There exist elliptic global $A$-parameters $\bm\psi', \bm\psi''$ for $\SO(2n')^{\mfk D'}$ and $\SO(2n'')^{\mfk D''}$, respectively, such that  
\begin{enumerate}
\item
\begin{equation*}
\bm\psi'=\bm\psi_1\boxtimes\sp_{a_1}+\cdots+\bm\psi_r\boxtimes\sp_{a_r}, \quad \bm\psi''=\bm\psi_{r+1}\boxtimes\sp_{a_{r+1}}+\cdots+\bm\psi_k\boxtimes\sp_{a_k},
\end{equation*}
where $\bm\psi_i$ is an irreducible self-dual cuspidal automorphic representation of $\GL(2n_i, \Ade_F)$ with central character equal to the quadratic character of $\mbf C_F$ associated with $\mfk D_i$, for each $1\le i\le k$;
\item
$\tilde{\bm\psi}_{i, v_0}=\phi^0_i$ for each $1\le i\le k$;
\item
$\tilde{\bm\psi}_{i, v_1}=\phi_i$ for each $1\le i\le k$; and
\item
$\bm\psi=\bm\psi'+\bm\psi''$ is an elliptic global $A$-parameter for $\mbf G$. Moreover, for any real place $\tau$ of $F$, the localization $\tilde{\bm\psi}_\tau$ is a $\varsigma$-equivalence class of Adams--Johnson parameters for $\mbf G\otimes_{F, \tau}\bb R$.
\end{enumerate}
\end{lm}
\begin{proof}
For each $1\le i\le k$, let $\mbf G_i$ denote the quasisplit global even special orthogonal group $\SO(2n_i)^{\mfk D_i}$ over $F$. Fix, for each $1\le i\le k$, an irreducible representation $\pi_i$ of $\mbf G_i(F_{v_1})$ with $L$-parameter $\phi_i$, and an irreducible representation $\pi_i^0$ of $\mbf G_i(F_{v_0})$ with $L$-parameter $\phi_i^0$. Since $\mbf G_i$ is anisotropic and $\mbf G(F\otimes\bb R)$ admits discrete series, the standard Plancherel density theorem, for example \cite{Shi12}*{Theorem~1.1.(\rmnum1)}, applies. In particular, if we choose an algebraic irreducible representation $\xi_i$ of $\paren{\Res_{F/\bb Q}\mbf G_i}\otimes\bb C$, then it follows from \cite{Shi12}*{Theorem~1.1.(\rmnum1)} that there exists a $\xi_i$-cohomological cuspidal automorphic representations $\Pi_i$ of $\mbf G_i(\Ade_F)$ such that $\Pi_{i, v_0}\cong \pi_i^0$ and $\Pi_{i, v_1}\cong\pi_i$.

For each $1\le i\le k$, Arthur's multiplicity formula Theorem~\ref{mianieijidinfies} implies that there exists an elliptic global $A$-parameter $\bm\psi_i$ for $\mbf G_i$ such that $\Pi_i$ belongs to the corresponding packet, i.e. $\Pi_i\in \mcl A_{2, \bm\psi_i}(\mbf G_i)$. Since the localization $\bm\psi_{i, v_0}$ is isomorphic to $\phi_i^0$, and $\phi_i^0$ is simple supercuspidal, it follows that $\bm\psi_{i, v_0}$ must be in fact an irreducible self-dual cuspidal automorphic representation of $\GL(2n_i, \Ade_F)$ with central character equal to the quadratic character of $\mbf C_F$ associated with $\mfk D_i$. We then define elliptic global $A$-parameters
\begin{equation*}
\bm\psi'=\bm\psi_1\boxtimes\sp_{a_1}+\cdots+\bm\psi_r\boxtimes\sp_{a_r}, \quad \bm\psi''=\bm\psi_{r+1}\boxtimes\sp_{a_{r+1}}+\cdots+\bm\psi_k\boxtimes\sp_{a_k}.
\end{equation*}
for $\SO(2n')^{\mfk D'}$ and $\SO(2n'')^{\mfk D''}$, respectively. These parameters satisfy Conditions~(1)--(3).

Finally, by choosing algebraic irreducible representations $\xi_i$ of $\paren{\Res_{F/\bb Q}\mbf G_i}\otimes\bb C$ with highest weights sufficiently regular and sufficiently far apart from one another, we may ensure that, for each real place $\tau$ of $F$, the localizations $\tilde{\bm\psi}_\tau$ has $C$-algebraic and regular infinitesimal characters. Hence $\bm\psi=\bm\psi'+\bm\psi''$ also satisfies Condition~(4).
\end{proof}

We choose a global elliptic parameter $\bm\psi$ for $\mbf G$ satisfying the requirement of Lemma~\ref{glaobidninssAprieries}. Then the natural map
\begin{equation*}
\mfk S_{\bm\psi}\to \mfk S_{\tilde{\bm\psi}_{v_0}}
\end{equation*}
is an isomorphism. Consider $f=\otimes_vf_v\in\tilde{\mcl H}(\mbf G(\Ade_F))$ such that, for each $\tilde\pi_{v_0}\in\tilde\Pi_{\tilde{\bm\psi}_{v_0}}(\mbf G_{v_0})$,
\begin{equation}\label{lsieinfeiuhfniemms}
\tilde\Theta_{\tilde\pi_{v_0}}(f_{v_0})\cdot\iota_{\mfk w_{v_0}, z_{v_0}}(\tilde\pi_{v_0})(s^{\mfk E}s_{\tilde{\bm\psi}_{v_0}})=1,
\end{equation}
and the remaining components $f_v$ for $v\in\Pla_F\setm\{v_0\}$ are arbitrary. This is possible because $\tld{\bm\psi}_{v_0}$ is elementary, so by Lemma~\ref{isisieiemfiems} the map
\begin{equation*}
\iota_{\mfk w_{v_0}, z_{v_0}}: \tilde\Pi_{\tilde\psi}(G)\cong \Irr(\ovl{\mfk S}_{\tilde\psi}).
\end{equation*}
is a bijection. In particular, the characters $\tld\Theta_{\tilde\pi_{v_0}}$ are linearly independent as functions on $\tilde{\mcl H}(\mbf G(F_{v_0}))$. Hence such an $f_{v_0}$ exists.

We apply Equation~\eqref{ismsiniefmeis} to $f$. For each $\mfk E'\in \mcl E_\ellip(\mbf G)$, let $f^{\mbf G^{\mfk E'}}\in\tilde{\mcl H}(\mbf G^{\mfk E'}(\Ade_F))$ be an adelic transfer of $f$ to $\mbf G^{\mfk E'}$. It follows from the definition of $f_{v_0}$ (Equation~\eqref{lsieinfeiuhfniemms}) and the main theorem in the quasisplit case (which is known, see \S\ref{lslsieieurheifmeis}) that the term on the right-hand side of Equation~\eqref{ismsiniefmeis} corresponding to $\mfk E'$ vanishes unless $\mbf G^{\mfk E'}_{v_0}\cong\mbf G^{\mfk E}_{v_0}$. Indeed,
\begin{align*}
\tilde\Theta_{\tilde{\bm\psi}^{\mfk E'}_{v_0}}(f_{v_0}^{\mbf G^{\mfk E'}})
&=\sum_{\tilde\pi_{v_0}\in\tld\Pi_{\tld{\bm\psi}_{v_0}}(\mbf G_{v_0})}\iota_{\mfk w_{v_0}, z_{v_0}}(\tilde\pi_{v_0})(s^{\mfk E'}s_{\tld{\bm\psi}})\cdot\tilde\Theta_{\tilde\pi_{v_0}}(f_{v_0})\\
&=\sum_{\tilde\pi_{v_0}\in\tld\Pi_{\bm\psi_{v_0}}(\mbf G_{v_0})}\iota_{\mfk w_{v_0}, z_{v_0}}(\tilde\pi_{v_0})(s^{\mfk E'}\cdot s^{\mfk E}).
\end{align*}
By Lemma~\ref{isisieiemfiems} and orthogonality relation, this expression vanishes unless $s^{\mfk E'}=s^{\mfk E}$ or $s^{\mfk E'}=s^{\mfk E}\cdot z_{\tilde\psi_{v_0}}$, or equivalently $\mbf G^{\mfk E'}_{v_0}\cong \mfk G^{\mfk E}_{v_0}$.

Moreover, in view of the shape of $\bm\psi$ and our choices of $\mfk D_i$, the terms on the right-hand side of Equation~\eqref{ismsiniefmeis} vanishes unless $\mbf G^{\mfk E'}\cong \mbf G^{\mfk E}$.\footnote{This is the point at which we must restrict to good parameters.} Then it follows from the simple stable trace formula Equation~\eqref{simplestaleitneirnies} and linear independence of characters at unramified places of $F$ that
\begin{equation}\label{isnsieneifes}
\sum_{\tilde\Pi}\prod_{v\in\Pla}\tilde\Theta_{\tilde\Pi_v}(f_v)=\iota(\mfk E)\frac{m_{\bm\psi^{\mfk E}}\ve_{\bm\psi^{\mfk E}}(s_{\bm\psi^{\mfk E}})}{m_{\bm\psi}\#\ovl{\mfk S}_{\bm\psi^{\mfk E}}}\prod_{v\in\Pla}\tilde\Theta_{\tilde{\bm\psi}^{\mfk E}_v}(f^{\mbf G^{\mfk E}}_v)=\frac{\ve_{\bm\psi^{\mfk E}}(s_{\bm\psi^{\mfk E}})}{\#\ovl{\mfk S}_{\bm\psi}}\prod_{v\in\Pla}\tilde\Theta_{\tilde{\bm\psi}^{\mfk E}_v}(f^{\mbf G^{\mfk E}}_v).
\end{equation}
Here $\Pla$ is a finite set of places of $F$ containing $\infPla_F\cup\{v_0, v_1\}$ such that, for each finite place $v$ of $F$ outside $\Pla$,
\begin{itemize}
\item
$\tilde\phi_{\tilde{\bm\psi}_v}$ is unramified;
\item
the inner twist $(\varrho_v, z_v)$ is trivial; and
\item
$f_v=\uno_{\mdc K_v}$ is the characteristic function of the fixed $\varsigma$-invariant hyperspecial subgroup $\mdc K_v$ of $\mbf G(F_v)$.
\end{itemize}
Such a choice is possible by the definition of $\tilde{\mcl H}(\mbf G(\Ade_F))$. With this choice, the sum on the left-hand side runs through all discrete automorphic representations $\tilde\Pi\in \tld{\Irr}_\disc(\mbf G(\Ade_F))$ such that  
\begin{enumerate}
\item
$\tilde\Pi_v$ is unramified with $L$-parameter $\tilde\phi_{\tilde{\bm\psi}_v}$ for any $v\in\fPla_F\setm\Pla$; and
\item
$\tilde\Pi_v\in \tilde\Pi_{\tilde{\bm\psi}_v}(\mbf G\otimes_FF_v)$ for each $v\in\Pla$.
\end{enumerate}
(see~\cite{C-G15}*{\S11.5} for a similar argument).

We now analyze the contribution to both sides of Equation~\eqref{isnsieneifes} for each $v\in\Pla\setm\{v_1\}$.
\begin{itemize}
\item
Suppose $v=v_0$. Then
\begin{equation}\label{ismsineimfeis}
\tilde\Theta_{\tilde\Pi_{v_0}}(f_{v_0})\cdot\iota_{\mfk w_{v_0}, z_{v_0}}(\tilde\Pi_{v_0})(s^{\mfk E}s_{\tilde{\bm\psi}_{v_0}})=1
\end{equation}
by the choice of $f_{v_0}$. On the other hand, by the main theorem in the quasisplit case,
\begin{align}\label{ismsiemfiems}
\tilde\Theta_{\tilde{\bm\psi}^{\mfk E}_{v_0}}(f^{\mbf G^{\mfk E}}_{v_0})&=\sum_{\tilde\pi_{v_0}\in\tilde\Pi_{\tilde{\bm\psi}_{v_0}}(\mbf G\otimes_FF_{v_0})}\iota_{\mfk w_{v_0}, z_{v_0}}(\tilde\pi_{v_0})(s^{\mfk E}s_{\tilde{\bm\psi}_{v_0}})\tilde\Theta_{\tilde\pi_{v_0}}(f_{v_0})\\
&=\#\tilde\Pi_{\tilde{\bm\psi}_{v_0}}(\mbf G\otimes_FF_{v_0})\\
&=\#\ovl{\mfk S}_{\bm\psi}.
\end{align}
Here for the last equality we used Equation~\eqref{isisieiemfiems}, noting that $\tilde{\bm\psi}_{v_0}$ is elementary.
\item
Suppose $v\in\Pla\setm\paren{\infPla_F\cup\{v_0, v_1\}}$. As $\mbf G_v$ is quasisplit, it follows from the main theorem in the quasisplit case that
\begin{equation}\label{ismsieifeniefnsiws}
\sum_{\tilde\pi_v\in\tilde\Pi_{\tilde{\bm\psi}_v}(\mbf G\otimes_FF_v)}\iota_{\mfk w_v, z_v}(\tilde\pi_v)(s^{\mfk E}s_{\tilde{\bm\psi}_v})\tilde\Theta_{\tilde\pi_v}(f_v)=\tilde\Theta_{\tilde{\bm\psi}^{\mfk E}_{v_1}}(f^{\mbf G^{\mfk E}}_v).
\end{equation}
\item
Suppose $\tau\in\infPla_F$. As $\tilde{\bm\psi}_\tau$ is an Adams--Johnson parameter, by Theorem~\ref{ismsienfieimfws}, we also have
\begin{equation}\label{ismiiwiwiwoqq}
\sum_{\tilde\pi_\tau\in\tilde\Pi_{\tilde{\bm\psi}_\tau}(G\otimes_{F, \tau}\bb R)}\iota_{\mfk w_\tau, z_\tau}(\tilde\pi_\tau)(s^{\mfk E}s_{\tilde{\bm\psi}_\tau})\tilde\Theta_{\tilde\pi_\tau}(f_\tau)=\tilde\Theta_{\tilde{\bm\psi}^{\mfk E}_\tau}(f_\tau^{\mbf G^{\mfk E}}).
\end{equation}
\end{itemize}

For each $\tilde\Pi$ appearing in the left-hand side of Equation~\eqref{isnsieneifes}, it follows from Arthur's multiplicity formula Equation~\eqref{mianieijidinfies} that
\begin{equation}\label{ismsieifniesws}
\prod_{v\in\Pla}\iota_{\mfk w_v, z_v}(\tilde\Pi_v)(s^{\mfk E}s_{\bm\psi})=\ve_{\bm\psi}(s^{\mfk E}s_{\bm\psi}).
\end{equation}
Conversely, it follows from Lemma~\ref{isisieiemfiems} that for any
\begin{equation*}
\btimes_{v\in\Pla\setm\{v_0\}}\tilde\Pi^\dagger_v\in\prod_{v\in\Pla\setm\{v_0\}}\tilde\Pi_{\tilde{\bm\psi}_v}(\mbf G\otimes_FF_v),
\end{equation*}
there exists a unique $\tilde\Pi\in\tld\Irr_\disc(\mbf G(\Ade_F))$ appearing in the left-hand side of Equation~\eqref{isnsieneifes} with $\prod_{v\in\Pla\setm\{v_0\}}\tilde\Pi_v\cong \prod_{v\in\Pla\setm\{v_0\}}\tilde\Pi^\dagger_v$.

Finally, it follows from \cite{Art13}*{Lemma 4.4.1} that
\begin{equation}\label{eiiwiwimwmwww}
\ve_{\bm\psi^{\mfk E}}(s_{\bm\psi^{\mfk E}})=\ve_{\bm\psi}(s^{\mfk E}s_{\bm\psi}).
\end{equation}
So we conclude from Equations~\eqref{isnsieneifes}, \eqref{ismsineimfeis}, \eqref{ismsiemfiems}, \eqref{ismsieifeniefnsiws}, \eqref{ismiiwiwiwoqq}, \eqref{ismsieifniesws}, and \eqref{eiiwiwimwmwww} that
\begin{equation*}
\sum_{\tilde\pi_{v_1}\in\tilde\Pi_{\tilde{\bm\psi}_{v_1}}(\mbf G\otimes_FF_{v_1})}\iota_{\mfk w_{v_1}, z_{v_1}}(\tilde\pi_{v_1})(s^{\mfk e}s_{\tilde{\bm\psi}_{v_1}})\tilde\Theta_{\tilde\pi_{v_1}}(f_{v_1})=\tilde\Theta_{\tilde{\bm\psi}_{v_1}^{\mfk e}}(f^{\mbf G^{\mfk e}}_{v_1}),
\end{equation*}
which proves the main theorem for $\tilde\psi=\tilde{\bm\psi}_{v_1}$.

\section{Non-discrete case}\label{non-disniniieheins}

In this section, suppose $K$ is non-Archimedean. We prove the main theorem for local $A$-parameters $\psi\in\Psi(G)$ that factors through an good local $A$-parameter $\psi^{\mfk e}$ of $G^{\mfk e}$ that is not discrete. 
We write $\mfk d=\disc(V)$ for the discriminant of $V$.


In this case there is a properly contained Levi subgroup $M_{\mfk e}$ of $G^{\mfk e}$ such that $\psi^{\mfk e}$ factors through an good discrete $A$-parameter for $M_{\mfk e}$. Note that $M_{\mfk e}$ is of the form
\begin{equation*}
M_{\mfk e}=\paren{\SO(2m_1)^{\mfk d_1}\times \prod_{i\in I_1}\GL(n_i)}\times \paren{\SO(2m_2)^{\mfk d_2}\times \prod_{i\in I_2}\GL(n_i)},
\end{equation*}
with $\mfk d_1\mfk d_2=\mfk d\in K^\times/(K^\times)^2$. So the $A$-parameter $\psi$ factors through an $A$-parameter $\psi_M$ of a Levi subgroup $\hat{M^*}$ of $\hat G$, where $M^*$ is a standard Levi subgroup of $G^*$ such that 
\begin{equation*}
M^*\cong\SO(2n_0)^{\mfk d}\times\prod_{i\in I}\GL(n_i)
\end{equation*}
with $I=I_1\coprod I_2$ and $n_0=m_1+m_2$. We may write
\begin{equation*}
\psi^\GL=\psi_\tau+\psi_0^\GL+\psi_\tau^\vee,
\end{equation*}
where $\psi_\tau$ is a representation of $\WD_K\times\SL(2, \bb C)$ corresponding to an admissible irreducible unitarizable representation $\tau=\prod_{i\in I}\tau_i$ of $\prod_{i\in I}\GL(n_i)$ of Arthur type, and $\psi_0$ is a discrete parameter for $\SO(2n_0)^{\mfk d}$. We write $M=\varrho(M^*)$, which is a Levi subgroup of $G$. Then $M_{\mfk e}$ is an endoscopic group for $M$, i.e. there exists an endoscopic triple $\mfk e_0$ for $M$ such that  $M^{\mfk e_0}=M_{\mfk e}$. We may choose $\mfk e_0$ such that, under the commutative diagram
\begin{equation*}
\begin{tikzcd}[sep=large]
S_{\psi_M}\ar[r, hookrightarrow, "\iota"]\ar[d] & S_\psi\ar[d]\\
\mfk S_{\psi_M}\ar[r, hookrightarrow, "\ovl\iota"] & \mfk S_\psi
\end{tikzcd},
\end{equation*}
the image of $s^{\mfk e_0}\in Z_{\hat M}(\psi_M)$ in $\mfk S_{\psi_M}= \pi_0(Z_{\hat M}(\psi_M))$ is mapped by $\ovl\iota$ to the image of $s^{\mfk e}\in S_\psi$ in $\mfk S_\psi$.

We recall the following lemma on descent property of transfer pairings.

\begin{lm}\label{ismsieiws}\enskip
\begin{enumerate}
\item
For each $\gamma\in M^{\mfk e_0}(K)$ and $\delta\in M(K)$, the following identity holds:
\begin{equation*}
\Delta[\mfk w_0, \mfk e_0, z](\gamma, \delta)=\Delta[\mfk w, \mfk e, z](\gamma, \delta)\cdot\largel{\frac{D_{M^{\mfk e_0}}^{G^{\mfk e}}(\gamma)}{D_M^G(\delta)}}^{\frac{1}{2}}.
\end{equation*}
Here $\mfk w_0$ is the induced Whittaker datum on $M^*$, and $D_{M^{\mfk e_0}}^{G^{\mfk e}}(\gamma)$ and $D_M^G(\delta)$ are the relative Weyl discriminants.
\item
If $f\in \mcl H(G)$ and $f^{\mfk e}\in \mcl H(G^{\mfk e})$ are $\Delta[\mfk w, \mfk e, z]$-matching, then their constant terms $f_M\in \mcl H(M)$ and  $f^{\mfk e}_{M^{\mfk e_0}}\in \mcl H(M^{\mfk e_0})$ is $\Delta[\mfk w_0, \mfk e_0, z]$-matching.
\end{enumerate}
\end{lm}
\begin{proof}
The proof is similar to that of \cite{KMSW}*{Lemma 1.1.4}.
\end{proof}

We now consider the $A$-packets corresponding to $\psi^{\mfk e}$ and $\psi$. It follows from Theorem~\ref{psmisemifniefms} that
\begin{equation*}
\tilde\Pi_{\psi^{\mfk e}}(G^{\mfk e})=\coprod_{\tilde\sigma\in\tilde\Pi_\psi(M^{\mfk e_0})}\{\text{irreducible components of }\bx I_{M^{\mfk e_0}}^{G^{\mfk e}}(\tilde\sigma)\}
\end{equation*}
and
\begin{equation*}
\tilde\Pi_{\psi}(G)=\coprod_{\tilde\pi\in\tilde\Pi_{\psi_M}(M)}\{\text{irreducible components of }\bx I_M^G(\tilde\pi)\}.\footnote{Note that $M$ is a product of a special orthogonal group and general linear groups, so the $A$-packets and also the pairings $\iota_{\mfk w_0, z}$ are obviously defined.}
\end{equation*}
Here the induced representations are defined by chosen representatives of $\tilde\sigma$ or $\tilde\pi$, and they are multiplicity-free sums of irreducible admissible representations whose $\varsigma$-equivalence classes are independent of the representatives chosen. Hence
\begin{equation}\label{ismsieiowpw}
\sum_{\tilde\pi\in\tilde\Pi_\psi(G)}\iota_{\mfk w, z}(\tilde\pi)(s^{\mfk e}s_\psi)\tilde\Theta_{\tilde\pi}=\sum_{\tilde\pi\in\tilde\Pi_\psi(G)}\iota_{\mfk w, z}(\tilde\pi)(\iota(s^{\mfk e_0}s_{\psi_M}))\tilde\Theta_{\tilde\pi}=\sum_{\tilde\pi\in\tilde\Pi_{\psi_M}(M)}\iota_{\mfk w_0, z}(\tilde\pi)(s^{\mfk e_0}s_{\psi_M})\tilde\Theta_{\bx I_M^G(\tilde\pi)}.
\end{equation}

It follows from Equation~\eqref{ismsieiowpw} and the parabolic descent formula that
\begin{equation*}
\sum_{\tilde\pi\in\tilde\Pi_\psi(G)}\iota_{\mfk w, z}(\tilde\pi)(s^{\mfk e}s_\psi)\tilde\Theta_{\tilde\pi}(f)=\sum_{\tilde\pi\in\tilde\Pi_{\psi_M}(M)}\iota_{\mfk w_0, z}(\tilde\pi)(s^{\mfk e_0}s_{\psi_M})\tilde\Theta_{\tilde\pi}(f_M).
\end{equation*}
Similarly, it follows from the parabolic descent formula that
\begin{equation*}
\tilde\Theta_{\psi^{\mfk e}}(f^{\mfk e})=\tilde\Theta_{\psi^{\mfk e}}(f^{\mfk e}_{M^{\mfk e_0}}).
\end{equation*}
Here we note that $f^{\mfk e}_{M^{\mfk e_0}}$ is $\varsigma$-invariant. 


We now prove the main theorem Theorem~\ref{thm main}. By Lemma~\ref{ismsieiws}, $f^{\mfk e}_{M^{\mfk e_0}}$ and $f_M$ are $\Delta[\mfk w_0, \mfk e_0, z]$-matching test functions. Thus, it follows from the main theorem in the discrete case (which is known, see~\S\ref{lsisieiutythrenfmess})  that
\begin{equation*}
\tilde\Theta_{\psi^{\mfk e}}(f^{\mfk e})=\tilde\Theta_{\psi^{\mfk e}}(f^{\mfk e}_{M^{\mfk e_0}})=\sum_{\tilde\pi\in\tilde\Pi_{\psi_M}(M)}\iota_{\mfk w_0, z}(\tilde\pi)(s^{\mfk e_0}s_{\psi_M})\tilde\Theta_{\tilde\pi}(f_M)=\sum_{\tilde\pi\in\tilde\Pi_\psi(G)}\iota_{\mfk w, z}(\tilde\pi)(s^{\mfk e}s_\psi)\tilde\Theta_{\tilde\pi}(f).
\end{equation*}
This concludes the proof when the good $A$-parameter $\psi^{\mfk e}$ is not discrete.

\bibliographystyle{plain}
\bibliography{bibliography}

\vspace{2em}
\noindent\textsc{Hao Peng}\\
Department of Mathematics, Massachusetts Institute of Technology, Cambridge, MA 02139, USA\\
\textit{Email}: \texttt{hao\_peng@mit.edu}
    
\end{document}